\definecolor{red}{RGB}{255,25,25}
\definecolor{blue}{RGB}{25,50,200}
\newtheorem{theorem}{Theorem}[section]
\crefname{theorem}{Theorem}{Theorems}
\newtheorem{lemma}[theorem]{Lemma}
\crefname{lemma}{Lemma}{Lemmas}
\newtheorem{proposition}[theorem]{Proposition}
\crefname{proposition}{Proposition}{Propositions}
\crefname{prop}{Proposition}{Propositions}
\newtheorem{corollary}[theorem]{Corollary}
\crefname{corollary}{Corollary}{Corollaries}
\crefname{cor}{Corollary}{Corollaries}
\crefname{conjecture}{Conjecture}{Conjectures}
\crefname{conj}{Conjecture}{Conjectures}
\newtheorem*{conj*}{Conjecture}
\crefname{conj}{Conjecture}{Conjectures}
\crefname{conjA}{Conjecture}{Conjecture}
\crefname{conjB}{Conjecture}{Conjecture}
\crefname{conjC}{Conjecture}{Conjecture}
\crefname{conjDk}{Conjecture}{Conjecture}
\crefname{conjD}{Conjecture}{Conjecture}
\crefname{conjH}{Conjecture}{Conjecture}
\crefname{conjGr}{Conjecture}{Conjecture}
\theoremstyle{definition}
\newtheorem{definition}[theorem]{Definition}
\crefname{definition}{Definition}{Definitions}
\crefname{defn}{Definition}{Definitions}
\newtheorem{example}[theorem]{Example}
\crefname{example}{Example}{Examples}
\crefname{notation}{Notation}{Notation}
\newtheorem*{notation*}{Notation}
\crefname{notation}{Notation}{Notation}
\newtheorem*{convention*}{Convention}
\crefname{convention}{Convention}{Convention}
\crefname{problem}{Problem}{Problems}
\newtheorem{question}[theorem]{Question}
\crefname{question}{Question}{Questions}
\crefname{condition}{Condition}{Conditions}
\newtheorem{assumption}[theorem]{Assumption}
\crefname{assumption}{Assumption}{Assumptions}
\crefname{propGr}{Property}{Property}
\theoremstyle{remark}
\crefname{rmk}{Remark}{Remarks}
\newtheorem*{rmk*}{Remark}
\crefname{rmk}{Remark}{Remarks}
\newtheorem{remark}[theorem]{Remark}
\crefname{remark}{Remark}{Remarks}
\crefname{fact}{Fact}{Facts}
\newtheorem{claim}[theorem]{Claim}
\crefname{claim}{Claim}{Claims}
\newtheorem*{claim*}{Claim}
\crefname{claim}{Claim}{Claims}
\crefname{step}{Step}{Steps}
\crefname{case}{Case}{Cases}
\numberwithin{equation}{section}
\newcommand{\longsurjmap}{\relbar\joinrel\twoheadrightarrow}
\newcommand{\lra}{\longrightarrow}
\newcommand{\bC}{\mathbf{C}}
\newcommand{\bN}{\mathbf{N}}
\newcommand{\bP}{\mathbf{P}}
\newcommand{\bQ}{\mathbf{Q}}
\newcommand{\bR}{\mathbf{R}}
\newcommand{\bZ}{\mathbf{Z}}
\newcommand{\bk}{\mathbf{k}}
\newcommand{\sE}{\mathscr{E}}
\newcommand{\sL}{\mathscr{L}}
\newcommand{\sT}{\mathsf{T}}
\newcommand{\alg}{\operatorname{alg}}
\newcommand{\CH}{\mathsf{CH}}
\newcommand{\et}{{\textrm{\'et}}}
\newcommand{\GK}{\operatorname{GKdim}}
\newcommand{\Hom}{\operatorname{Hom}}
\newcommand{\id}{\operatorname{id}}
\newcommand{\isom}{\simeq}
\newcommand{\lov}{\operatorname{lov}}
\newcommand{\Mat}{\operatorname{M}}
\newcommand{\Nef}{\operatorname{Nef}}
\newcommand{\N}{\mathsf{N}}
\newcommand{\NP}{\mathsf{NP}}
\newcommand{\NPbar}{\overline{\mathsf{NP}}}
\newcommand{\num}{\equiv}
\newcommand{\wnum}{\equiv_{\mathsf{w}}}
\newcommand{\wg}{\succ_{\mathsf{w}}}
\newcommand{\wge}{\succeq_{\mathsf{w}}}
\newcommand{\plov}{\operatorname{plov}}
\newcommand{\pr}{\operatorname{pr}}
\newcommand{\rank}{\operatorname{rank}}
\newcommand{\Sym}{\operatorname{Sym}}
\newcommand{\Vol}{\operatorname{Vol}}
\newcommand{\W}{\mathsf{W}}
\newcommand{\Z}{\mathsf{Z}}
\begin{document}

\title[An upper bound for $\plov$]{An upper bound for polynomial volume growth of automorphisms of zero entropy}

\author{Fei Hu}
\address{
School of Mathematics, Nanjing University, Nanjing, China
\endgraf
Department of Mathematics, University of Oslo, Oslo, Norway
\endgraf
Department of Mathematics, Harvard University, Cambridge, USA}
\email{\href{mailto:fhu@nju.edu.cn}{\tt fhu@nju.edu.cn}}

\author{Chen Jiang}
\address{Shanghai Center for Mathematical Sciences \& School of Mathematical Sciences, Fudan University, Shanghai, China}
\email{\href{mailto:chenjiang@fudan.edu.cn}{chenjiang@fudan.edu.cn}}
 
\begin{abstract}
Let $X$ be a normal projective variety of dimension $d$ over an algebraically closed field and $f$ an automorphism of $X$.
Suppose that the pullback $f^*|_{\mathsf{N}^1(X)_\mathbf{R}}$ of $f$ on the real N\'eron--Severi space $\mathsf{N}^1(X)_\mathbf{R}$ is unipotent and denote the index of the eigenvalue $1$ by $k+1$.
We establish the following upper bound for the polynomial volume growth $\mathrm{plov}(f)$ of $f$:
\[
\mathrm{plov}(f) \le (k/2 + 1)d.
\]
This inequality is optimal in certain cases.
Moreover, we prove that $k\le 2(d-1)$, extending a result of Dinh--Lin--Oguiso--Zhang for compact K\"ahler manifolds to arbitrary characteristic.
By combining these two inequalities, we obtain the optimal bound
\[
\mathrm{plov}(f) \le d^2,
\]
that affirmatively answers the questions of Cantat--Paris-Romaskevich and Lin--Oguiso--Zhang.
\end{abstract}

\subjclass[2020]{
14J50, 
16P90, 
05E14, 
16S38. 
}

\keywords{automorphism, zero entropy, polynomial volume growth, Gelfand--Kirillov dimension, quasi-unipotency, restricted partition, unimodality, degree growth}

\thanks{The first author was supported by an NSFC grant \#12371045 and Young Research Talents grant \#300814 from the Research Council of Norway.
The second author was supported by National Key Research and Development Program of China \#2023YFA1010600, NSFC for Innovative Research Groups \#12121001, and National Key Research and Development Program of China \#2020YFA0713200.}

\maketitle

\section{Introduction}

Given a surjective endomorphism $f$ of a smooth projective variety $X$ of dimension $d$ over $\bC$,
Gromov \cite{Gromov03} introduced in 1977 the so-called {\it iterated graph} $\Gamma_n \subset X^n$ of $f$, i.e., the graph of the morphism $(f,\ldots,f^{n-1}) \colon X \to X^{n-1}$.
He bounded the topological entropy $h_{\mathrm{top}}(f)$ of $f$ by the {\it volume growth $\lov(f)$} of $f$, and further by the {\it algebraic entropy $h_{\mathrm{alg}}(f)$} of $f$ as follows:
\begin{align*}
h_{\mathrm{top}}(f) &\le \lov(f) \coloneqq \limsup_{n\to \infty} \frac{\log \Vol (\Gamma_n)}{n} \\
&\le h_{\alg}(f) \coloneqq \log \max_{0\le i\le d} \lambda_i(f),
\end{align*}
where the volume $\Vol(\Gamma_n)$ is computed against the ample divisor on the product variety $X^n$ induced from an arbitrary ample divisor $H_X$ on $X$, 
and the {\it $i$-th dynamical degree $\lambda_i(f)$} of $f$ is defined by
\begin{align}\label{eq:dyndeg}
\lambda_i(f) &\coloneqq \lim_{n\to \infty} ((f^n)^*H_X^i \cdot H_X^{d-i})^{1/n}.
\end{align}
By combining this with Yomdin's remarkable inequality $h_{\mathrm{alg}}(f) \le h_{\mathrm{top}}(f)$---which resolves Shub's entropy conjecture (see \cite{Yomdin87})---Gromov's result yields the fundamental equality in higher-dimensional algebraic and holomorphic dynamics:
\[
h_{\mathrm{top}}(f) = \lov(f) = h_{\mathrm{alg}}(f).
\]
Recently, there has been growing interest in varieties with slow dynamics (see, e.g., \cite{Labrousse12, Marco13, Cantat18-ICM, LB19, CPR21, FFO21, DLOZ22, LOZ, Hu-GK-AV}), particularly in automorphisms of zero entropy.

From now on, we work over an algebraically closed field $\bk$ of arbitrary characteristic, unless otherwise specified.
Let $X$ be a normal projective variety of dimension $d$, $H_X$ an ample divisor on $X$, and $f$ an automorphism of $X$, over $\bk$.

Denote by $\N^1(X)_\bR$ the real N\'eron--Severi space of Cartier divisors on $X$ modulo numerical equivalence, which is a finite-dimensional $\bR$-vector space.
Suppose that $f$ is of zero entropy.
Equivalently, the induced pullback action $f^*|_{\N^1(X)_\bR}$ is {\it quasi-unipotent}, i.e., all eigenvalues of $f^*|_{\N^1(X)_\bR}$ are roots of unity (see, e.g., \cite[Lemma~2.2]{Hu-GK-AV}).
Denote by $k+1$ the maximum size of Jordan blocks in the Jordan canonical form of $f^*|_{\N^1(X)_\bR}$.
By \cite[Lemma~6.12]{Keeler00}, we note that $k$ is always an even nonnegative integer, say $k=2r$.

\subsection{Polynomial volume growth}
\label{subsec plov}

In their study of polynomial entropy in slow dynamics, Cantat and Paris-Romaskevich \cite{CPR21} introduced the \textit{polynomial volume growth $\plov(f)$} of $f$ as follows:
\[
\plov(f) \coloneqq \limsup_{n\to \infty} \frac{\log \Vol(\Gamma_n)}{\log n}.
\]
This quantity is closely related to the Gelfand--Kirillov dimension of the twisted homogeneous coordinate ring associated with $(X,f)$ (see \cite[Proposition~6.11]{Keeler00}).
Lin, Oguiso, and Zhang \cite{LOZ} first noticed the coincidence of these two invariants in algebraic dynamics and noncommutative geometry, where, among many other things, they also improved Keeler's upper bound $k(d-1)+d$ for $\plov(f)$, using dynamical filtrations introduced by their earlier joint work \cite{DLOZ22} with Dinh.

Our first main result establishes a new upper bound for $\plov(f)$, which is nearly half of the previously known upper bounds.
This result provides an affirmative answer to \cite[Question~6.6]{LOZ} and \cite[Question~2.10]{Hu-GK-AV}.
The proof relies on combinatorial and representation-theoretic techniques (see \cref{rmk:idea-plov}).
Along the way, we also recover the unimodality of restricted partition numbers (see \S\ref{subsec rank}).

\begin{theorem}
\label{thm:plov}
Let $X$ be a normal projective variety of dimension $d$ over $\bk$ and $f$ an automorphism of $X$.
Suppose that $f^*|_{\N^1(X)_\bR}$ is quasi-unipotent and the maximum size of Jordan blocks of $f^*|_{\N^1(X)_\bR}$ is $k+1$.
Then we have
\[
\plov(f) \le (k/2+1)d.
\]
\end{theorem}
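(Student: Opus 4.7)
The plan is to reduce the bound to a purely algebraic question about a nilpotent operator on $\N^1(X)_\bR$ preserving top intersection numbers, and then exploit the symmetry of an associated weight filtration to control the growth of $\Vol(\Gamma_n)$.

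First, by replacing $f$ with a suitable iterate, one may assume that $M := f^*|_{\N^1(X)_\bR}$ is unipotent; this rescales $\plov$ by a fixed constant and preserves the maximum Jordan block size $k+1$. Set $N := M - I$, so $N^{k+1}=0$. Since $\Gamma_n \cong X$ via the first projection and the restriction of the natural polarization on $X^n$ pulls back to $D_n := \sum_{j=0}^{n-1}(f^j)^*H$, one has $\Vol(\Gamma_n) = D_n^d$. Writing $(f^j)^*H = \sum_i \binom{j}{i} N^i H$ and using the hockey-stick identity,
$$D_n = \sum_{i=0}^{k}\binom{n}{i+1}\,\alpha_i,\qquad \alpha_i := N^i H.$$
Multinomial expansion exhibits $\Vol(\Gamma_n)$ as a polynomial in $n$ of degree at most $d + \max\{i_1+\cdots+i_d\}$, taken over tuples $(i_1,\ldots,i_d)\in\{0,\ldots,k\}^d$ for which the intersection number $\alpha_{i_1}\cdots\alpha_{i_d}$ is nonzero.

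The heart of the argument is the vanishing
$$\alpha_{i_1}\cdots\alpha_{i_d}=0 \qquad\text{whenever}\qquad i_1+\cdots+i_d > \tfrac{kd}{2};$$
once established, the degree bound becomes $d + kd/2 = (k/2+1)d$, proving the theorem. The starting input is that $f^*$ preserves top intersections, so $((f^n)^*\beta_1)\cdots((f^n)^*\beta_d) = \beta_1\cdots\beta_d$ for every $\beta_j\in \N^1(X)_\bR$; expanding each $(f^n)^*\beta_j$ in the basis $\{\alpha_i\}$ yields polynomial-in-$n$ identities whose top coefficients cascade downward, starting from the forced equality $\alpha_k^d=0$ (obtained from $((f^n)^*H)^d = H^d$ by reading off the coefficient of $n^{kd}$). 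To push the cascade up to the sharp threshold $kd/2$, I would invoke the weight filtration of $N$: by Jacobson--Morozov, $N$ extends to an $\mathfrak{sl}_2$-triple acting on $V := \N^1(X)_\bR$, giving a weight decomposition $V = \bigoplus_{j=-k}^{k} V_j$ with $N \colon V_j \to V_{j-2}$, and $H$ has top weight at most $k$. Since $\alpha_i$ has top weight at most $k-2i$, the desired vanishing reduces to a primitive-decomposition-type statement: an $M$-invariant symmetric $d$-linear form on $V$ must annihilate any tuple of total weight $\sum(k-2i_j) < 0$, giving $\sum i_j \le kd/2$ for every nonvanishing $\alpha_{i_1}\cdots\alpha_{i_d}$.

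The main obstacle will be the last step: making rigorous, in arbitrary characteristic and without Hodge-theoretic polarization, the assertion that $M$-invariance alone forces the weight-negativity vanishing. My plan is to handle it algebraically, by developing an $\mathfrak{sl}_2$-symmetrization of the intersection pairing purely from $M$-invariance, combined with induction on the nilpotency index and with the combinatorial unimodality of restricted partition numbers that governs the dimensions $\dim V_j$; this last input is also where the sharp factor $k/2$ originates and explains why the bound cannot be improved in general.
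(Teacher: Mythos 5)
Your proposal is correct in its overall architecture and, after the first reduction step (which matches the paper's \cref{lemma:plov}), it takes a genuinely different route to the key vanishing $N^{i_1}H\cdots N^{i_d}H = 0$ for $\sum i_j > dk/2$. The paper never touches the $\mathfrak{sl}_2$-structure of $\N^1(X)_\bR$ itself: it encodes the intersection numbers as a vector indexed by restricted partitions, derives a homogeneous linear system from the projection formula via backward induction on $n=\sum i_j$, and then shows the coefficient matrices $A_{k,d,n}$ have full column rank by realizing them as the lowering operator of the \emph{abstract} representation $\Sym^d W_k$ of $\mathfrak{sl}_2(\bC)$. You instead put the $\mathfrak{sl}_2$-structure directly on $V = \N^1(X)_\bR$ via Jacobson--Morozov and argue that the $f^*$-invariant intersection form must kill the negative-weight part of $\Sym^d V$, which contains $\alpha_{i_1}\cdots\alpha_{i_d}$ once $\sum i_j > dk/2$. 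This is a cleaner conceptual argument (it requires no induction and no explicit rank computation), at the cost of invoking Jacobson--Morozov, whereas the paper's approach is entirely constructive and extracts a byproduct (a new proof of unimodality of restricted partition numbers, \cref{rmk:unimodality}).

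Two points where your sketch needs care, one of which you flagged and one you did not. First, the projection formula gives invariance of the intersection form under $f^* = \id + N$, not a derivation condition for $N$ itself; the correct derivation is with respect to $N' := \log f^* = N - N^2/2 + \cdots$, so the $\mathfrak{sl}_2$-triple should be built around $N'$, not $N$. This causes no real trouble: $N'$ has the same Jordan type as $N$ (so $(N')^{k+1}=0$, max weight still $k$), and $N^i H = (N')^i H + (\text{terms in }(N')^{>i}H)$ still lies in weights $\le k-2i$, so $\alpha_{i_1}\cdots\alpha_{i_d}$ sits in weight $\le dk - 2\sum i_j$; since $q$ kills $\im\mathrm{f}$ and $(\Sym^d V)_{<0}\subset\im\mathrm{f}$, the vanishing follows. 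Second, your stated worries are partly misplaced: there is no characteristic issue, because $\N^1(X)_\bR$ is a real vector space regardless of $\Char\bk$ and Jacobson--Morozov is applied over $\bR$; and the unimodality of restricted partition numbers (which you suggest invoking) is not needed in your route --- it would only enter if one went through the rank computation of the $A_{k,d,n}$ as the paper does. The ``primitive-decomposition-type statement'' you gesture at reduces to the elementary observation that in any finite-dimensional $\mathfrak{sl}_2$-representation the kernel of the lowering operator consists of lowest-weight vectors, hence sits in nonpositive weight.
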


The upper bound in \cref{thm:plov} is optimal when $k/2+1$ divides $d$ (see \cref{ex optimal}).
On the other hand, if $k=2$ and $d$ is odd, then the actual optimal upper bound turns out to be $2d-1$ as shown in \cref{cor:k=2} (see also \cref{prop improve -1}).

Later, in \S\ref{subsec degree seq}, we shall prove that $k\leq 2d-2$ (see \cref{cor:degree-growth-upper-bound}), which generalizes \cite[Theorem~1.1]{DLOZ22} to arbitrary characteristic.
Thus, by combining \cref{thm:plov} and \cref{cor:degree-growth-upper-bound}, we also provide affirmative answers to \cite[Question~4.1]{CPR21} and \cite[Question~1.5(1)]{LOZ} in arbitrary characteristic (see \cref{rmk:Kahler-singular}).

\begin{corollary}
\label{cor:plov}
Let $X$ be a normal projective variety of dimension $d$ over $\bk$.
Let $f$ be an automorphism of zero entropy of $X$.
Then one has
\[
\plov(f) \le d^2.
\]
This upper bound is optimal by \cref{ex optimal}.
\end{corollary}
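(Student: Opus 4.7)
The plan is to derive this corollary directly by combining the two substantive ingredients already flagged in the introduction. The hypothesis that $f$ is of zero entropy means that $f^*|_{\N^1(X)_\bR}$ is quasi-unipotent, so all its Jordan blocks have finite size; let $k+1$ denote the maximum of these sizes (with $k$ automatically even by \cite[Lemma~6.12]{Keeler00}). Under this setup, \cref{thm:plov} applies verbatim and yields
\[
\plov(f) \le (k/2+1)\, d.
\]

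Next, I would invoke the bound $k \le 2(d-1)$, which is promised to be established in \S\ref{subsec degree seq} as \cref{cor:degree-growth-upper-bound}; this extends the Dinh--Lin--Oguiso--Zhang theorem from compact K\"ahler manifolds to arbitrary characteristic. Plugging this into the preceding inequality yields
\[
\plov(f) \le \bigl((d-1)+1\bigr)\, d = d^2,
\]
which is the asserted upper bound.

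For optimality, one appeals to \cref{ex optimal}: specializing to automorphisms that realize $k = 2(d-1)$, one has $k/2+1 = d$, which trivially divides $d$, so such examples saturate the bound in \cref{thm:plov} and hence simultaneously saturate the inequality $\plov(f) \le d^2$.

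The corollary is thus a one-line deduction once both ingredients are in hand. The main obstacle lies entirely in its two inputs rather than in the combination step: the refined estimate of \cref{thm:plov} (whose proof, per \cref{rmk:idea-plov}, rests on combinatorics, representation theory, and the unimodality of restricted partition numbers) and the Jordan-block bound $k \le 2(d-1)$ valid in arbitrary characteristic (to be handled via degree sequences). With those two pieces supplied by earlier sections, the argument for the corollary reduces to the substitution carried out above.
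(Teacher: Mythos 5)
Your proposal is correct and follows exactly the route the paper takes: the paper's own proof is a one-liner citing \cref{thm:plov} and \cref{cor:degree-growth-upper-bound}, which is precisely the combination you spell out, including the optimality reference to \cref{ex optimal}.
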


\begin{remark}
In noncommutative geometry, Artin, Tate, and Van den Bergh \cite{ATVdB90,AVdB90} introduced in the 1990s the so-called twisted homogeneous coordinate ring $B\coloneqq B(X,f,\sL)$ associated with a normal projective variety $X$ over $\bk$, an automorphism $f$ of $X$, and an invertible sheaf $\sL$ on $X$.
Keeler \cite{Keeler00} proved that the Gelfand--Kirillov dimension $\GK(B)$ is finite (and equals $\plov(f)+1$) if and only if the restriction $f^*|_{\N^1(X)_\bR}$ is quasi-unipotent (see also \cite{LOZ}).
Consequently, our result also provides an optimal upper bound $d^2+1$ for $\GK(B)$.
\end{remark}

\begin{remark}
\label{rmk:idea-plov}
The idea of our proof of \cref{thm:plov} is simple and traces back to \cite{AVdB90,Keeler00}.
Precisely, without loss of generality, we may assume that $f^*|_{\N^1(X)_\bR}$ is unipotent and hence can be written as $f^*|_{\N^1(X)_\bR} = \id +N$, where $N$ is a nilpotent operator on $\N^1(X)_\bR$.
Then by \cite[Proof of Lemma~6.13]{Keeler00} or \cite[Lemma~2.16]{LOZ} (see also \cref{lemma:plov}), we will be interested in the vanishing of intersection numbers $N^{i_1}H_X \cdots N^{i_d} H_X$.
Applying the projection formula and proceeding by induction, we derive a homogeneous system of linear equations, where these intersection numbers are unknowns.

To establish the vanishing of certain intersection numbers, we prove that the matrix of coefficients in the above homogeneous system of linear equations, denoted by $A_{k,d,n}$ later, is of full column rank whenever $n>dk/2$.
Although analyzing a single matrix $A_{k,d,n}$ directly appears to be quite intricate, our key observation is that the family of such matrices (as $n$ varies) can be naturally realized as representative matrices of Lefschetz operators of a representation of the Lie algebra $\mathfrak{sl}_2(\bC)$.
This realization enables us to apply a hard Lefschetz-type result, which asserts that the product matrix $A_{k,d,n+1}A_{k,d,n+2}\cdots A_{k,d,dk-n}$ is invertible for any $n<dk/2$.
See \cref{thm:full-rank} for details.
\end{remark}

\subsection{Polynomial growth rate of degree sequences}
\label{subsec degree seq}

The dynamical degrees $\lambda_i(f)$ of $f$, defined by \cref{eq:dyndeg}, are fundamental invariants of the algebraic dynamical system $(X, f)$, and their definition remains valid in arbitrary characteristic.
Roughly speaking, these invariants capture the exponential growth rate of the degree sequences $(\deg_i(f^n))_{n\in\bN}$, where
\[
\deg_i(f^n)\coloneqq (f^n)^*H_X^i\cdot H_X^{d-i}.
\]
Note that while $\deg_i(f^n)$ depends on the choice of $H_X$, its growth rate does not.
For further details, see \cite{DS17, Cantat18-ICM, Truong20, Dang20, CX20, DF21, HT, Xie-SC} and references therein.

Suppose that $f$ is of zero entropy as in \S\ref{subsec plov}, i.e., $f^*|_{\N^1(X)_\bR}$ is quasi-unipotent.
Then all $\lambda_i(f)$ are equal to $1$ by the log-concavity property of dynamical degrees (see \cite[Lemma~5.7]{Truong20}).
Thus, in this case, we are more interested in the polynomial growth rate of degree sequences $(\deg_i(f^n))_{n\in\bN}$.
When $X$ is a smooth complex projective variety, Dinh, Lin, Oguiso, and Zhang \cite{DLOZ22} obtained the following optimal estimate:
\[
\deg_1(f^n) = O(n^{2(d-1)}), \text{ as } n\to \infty,
\]
extending earlier results of Gizatullin \cite{Gizatullin80}, Cantat \cite{Cantat14a}, and Lo Bianco \cite{LB19} to higher dimensions.
A key ingredient of their proof is the use of dynamical filtrations; see \cite[\S3]{DLOZ22}.

Our next main result establishes the positivity (and hence the nonvanishing) of certain intersection numbers, which can be viewed as the opposite of \cref{thm:plov}.
We then derive an upper bound on the polynomial growth rate of the degree sequence $(\deg_1(f^n))_{n\in \bN}$, which extends \cite[Theorem~1.1]{DLOZ22} to arbitrary characteristic, without using the theory of dynamical filtrations developed in \cite[\S3]{DLOZ22}.

\begin{theorem}[{cf. \cref{thm:full-version-positivity}}]
\label{thm:positivity}
Let $X$ be a normal projective variety of dimension $d$, $H_X$ an ample divisor on $X$, and $f$ an automorphism of $X$, over $\bk$.
Suppose that $f^*|_{\N^1(X)_\bR}=\id+N$ is unipotent with $N^{2r}\neq 0$ and $N^{2r+1} = 0$, where $r$ is a positive integer.
Then there are positive integers $t_r,\ldots,t_1$ with $\sum_{i=1}^{r} t_i <d$, such that for any $0\le j\le r-1$, we have that
\[
(N^{2r}H_X)^{t_r}\cdot (N^{2r-2}H_X)^{t_{r-1}}\cdots (N^{2r-2j}H_X)^{t_{r-j}}\cdot H_X^{d-t_r-t_{r-1}- \cdots -t_{r-j}}>0.
\]
In particular, one has that $r\le d-1$.
\end{theorem}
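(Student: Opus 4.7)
The plan is to construct the positive integers $t_r, t_{r-1}, \ldots, t_1$ inductively, producing $t_{r-j}$ at step $j$ for $j=0,1,\ldots,r-1$. The starting observation is that $N^{2r}H_X$ is nef: from $N^{2r+1}=0$ one obtains the polynomial expansion $(f^n)^*H_X = \sum_{i=0}^{2r}\binom{n}{i} N^iH_X$ in $n$ of degree $2r$, and thus
\[
N^{2r}H_X \;=\; (2r)!\lim_{n\to\infty}\frac{(f^n)^*H_X}{n^{2r}}
\]
is a limit of positive multiples of the ample classes $(f^n)^*H_X$, hence nef. Since $N^{2r}\neq 0$ as an operator and ample classes span $\N^1(X)_\bR$, one may arrange $N^{2r}H_X\ne 0$ after a small ample perturbation of $H_X$. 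Then $N^{2r}H_X$ paired with the interior-movable curve class $H_X^{d-1}$ is strictly positive (by restricting to a general complete-intersection surface and invoking the Hodge index theorem, or by BDPP-type duality), giving $(N^{2r}H_X)\cdot H_X^{d-1}>0$ and settling the base case with $t_r=1$.

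For the inductive step, assume that $t_r,\ldots,t_{r-j+1}$ with $T := \sum_{i<j} t_{r-i} < d$ have been produced so that the chain intersection $\Theta\cdot H_X^{d-T}>0$, where $\Theta := (N^{2r}H_X)^{t_r}\cdots(N^{2r-2j+2}H_X)^{t_{r-j+1}}$. I would define
\[
t_{r-j} \;:=\; \min\bigl\{\,t\ge 1: \Theta\cdot(N^{2r-2j}H_X)^{t}\cdot H_X^{d-T-t}>0\,\bigr\}
\]
and verify that $T+t_{r-j}<d$. To produce such $t_{r-j}$, I would exploit that $(f^m)^*H_X$ and $(f^{-m})^*H_X$ are both ample for every positive integer $m$, so that
\[
\Theta\cdot\bigl((f^m)^*H_X + (f^{-m})^*H_X\bigr)^{t}\cdot H_X^{d-T-t} \;>\; 0
\]
for all $m$. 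Expanding the product in $m$ and using the symmetry $m\mapsto -m$ kills the odd-parity contributions of the intermediate classes $N^{2r-2j\pm 1}H_X$ in the leading coefficient, thereby isolating a positive multiple of the desired intersection with $(N^{2r-2j}H_X)^{t_{r-j}}$. Once all of the $t_i$'s are constructed with $\sum_{i=1}^r t_i<d$, the bound $r\le d-1$ follows formally since each $t_i\ge 1$.

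The main obstacle is the inductive step: one must carefully control the sign cancellations in the polynomial expansion above so that the target intersection is indeed strictly positive for some admissible $t_{r-j}$ within the constraint $T+t_{r-j}<d$, rather than being masked by contributions from other $N^iH_X$'s. The appearance of only even powers of $N$ in the conclusion reflects the $\mathfrak{sl}_2$-representation structure on $\N^1(X)_\bR$ indicated in \cref{rmk:idea-plov}, and making this rigorous in arbitrary characteristic without appealing to Hodge-theoretic tools forms the technical heart of the argument.
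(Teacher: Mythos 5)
Your base case is essentially the paper's: $N^{2r}H_X$ is nef as a limit of positive multiples of ample classes, it is nonzero (this is actually Keeler's Lemma~4.4, so no perturbation of $H_X$ is needed --- and indeed perturbing $H_X$ would change the statement, which fixes $H_X$), and a nef-nonzero class intersected with $H_X^{d-1}$ is strictly positive. The serious problem is the inductive step, where the argument as written does not go through.

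The symmetrization trick does not do what you claim. Writing $(f^m)^* + (f^{-m})^*$ applied to $H_X$ and using $\binom{m}{j}+\binom{-m}{j}=\binom{m}{j}+(-1)^j\binom{m+j-1}{j}$, the coefficient of $N^{2i}H_X$ grows like $m^{2i}$ and the coefficient of $N^{2i+1}H_X$ grows like $m^{2i}$ (not zero --- it is merely delayed by one degree). In particular the top-degree term in $m$ of $\Theta\cdot\bigl((f^m)^*H_X+(f^{-m})^*H_X\bigr)^t\cdot H_X^{d-T-t}$ is a positive multiple of $\Theta\cdot (N^{2r}H_X)^t\cdot H_X^{d-T-t}$, not of $\Theta\cdot (N^{2r-2j}H_X)^t\cdot H_X^{d-T-t}$. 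To ``lower the exponent'' from $2r$ to $2r-2j$ you must first know that $\Theta\cdot N^sH_X\wnum 0$ for all $s>2r-2j$, and this is exactly why the paper defines $t_{r-j}$ as a \emph{maximum} --- $t_{r-j}:=\max\{t:\ M_j\cdot(N^{2r-2j}H_X)^t\not\wnum 0\}$ --- so that $M_{j+1}\cdot N^sH_X\wnum 0$ for $s\geq 2r-2j$ automatically, while the vanishing of $M_{j+1}\cdot N^{2r-2j-1}H_X$ is extracted from the coefficient of $m^{(2r-2j)(t_{r-j}+1)-1}$ in a suitable polynomial $Q(m)$. With the minimum definition none of this is available, and the induction cannot proceed.

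Two further gaps: (i) you never show that your candidate set $\{t\geq 1:\Theta\cdot(N^{2r-2j}H_X)^t\cdot H_X^{d-T-t}>0\}$ is nonempty, which is the content of the assertion, not a consequence of the symmetrization; (ii) you never verify $T+t_{r-j}<d$. In the paper this bound is obtained by observing that if $s_{j+1}=d$, then $M_j\cdot((f^m)^*H_X)^{t_{r-j}}=M_j\cdot H_X^{t_{r-j}}$ is constant in $m$ (projection formula plus $f^*M_j\wnum M_j$), forcing $M_j\cdot(N^{2r-2j}H_X)^{t_{r-j}}=0$, a contradiction. Finally, the missing piece in your proposal is the Hodge-index mechanism: the strict positivity of $M_{j+1}\cdot N^{2r-2j-2}H_X$ is deduced in the paper by showing that the ``error term'' $M_j\cdot(N^{2r-2j}H_X)^{t_{r-j}-1}\cdot(N^{2r-2j-1}H_X)^2$ is strictly negative, via a bilinear-form argument (their Lemma~2.7) in the spirit of Dinh--Sibony. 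Without that negativity you cannot convert the nonnegativity of a polynomial coefficient into a strict inequality. These are not cosmetic omissions; they are the technical heart of the theorem, which your sketch acknowledges but does not supply.
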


As a consequence, we obtain an upper bound for the growth rate of $\deg_1(f^n)$, which is also optimal by \cite[Theorem~1.12]{Hu-GK-AV}.

\begin{corollary}
\label{cor:degree-growth-upper-bound}
Let $X$ be a normal projective variety of dimension $d$ over $\bk$.
Let $f$ be an automorphism of zero entropy of $X$.
Then we have that
\[
\deg_1(f^n) = O(n^{2(d-1)}), \text{ as } n\to \infty.
\]
Equivalently, the maximum size of Jordan blocks of $f^*|_{\N^1(X)_\bR}$ is at most $2d-1$.
\end{corollary}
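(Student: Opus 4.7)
The plan is to derive both assertions of \cref{cor:degree-growth-upper-bound} directly from \cref{thm:positivity}, after a reduction to the unipotent case. Since $f$ has zero entropy, $f^*|_{\N^1(X)_\bR}$ is quasi-unipotent, so taking $m$ to be the least common multiple of the orders of its eigenvalues makes $g^*|_{\N^1(X)_\bR} = \id + N$ unipotent, where $g \coloneqq f^m$. As all eigenvalues of $f^*|_{\N^1(X)_\bR}$ are nonzero (being roots of unity), its Jordan partition coincides with that of $g^*|_{\N^1(X)_\bR}$. It therefore suffices to control the growth of $\deg_1(g^k)$ and then aggregate over the finitely many residues $s \in \{0, \dots, m-1\}$ via the identity $\deg_1(f^{mk+s}) = ((f^s)^*(g^k)^* H_X) \cdot H_X^{d-1}$.

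Write the maximum Jordan block size of $g^*|_{\N^1(X)_\bR}$ as $2r+1$, which is odd because $k=2r$ is always even as recalled in the introduction; equivalently $N^{2r} \neq 0$ and $N^{2r+1} = 0$. If $r = 0$ both conclusions are trivial, so assume $r \ge 1$. Then \cref{thm:positivity} applies and yields $r \le d-1$, whence the maximum Jordan block size of $f^*|_{\N^1(X)_\bR}$ is at most $2d-1$, which proves the second assertion.

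For the degree bound, expand
\[
(g^k)^* H_X = (\id + N)^k H_X = \sum_{j=0}^{2r} \binom{k}{j} N^j H_X,
\]
and intersect with $H_X^{d-1}$ to obtain
\[
\deg_1(g^k) = \sum_{j=0}^{2r} \binom{k}{j} \bigl(N^j H_X \cdot H_X^{d-1}\bigr),
\]
a polynomial in $k$ of degree at most $2r \le 2(d-1)$. Pulling back by each $(f^s)^*$ for $0 \le s < m$ and applying the same expansion produces finitely many such polynomial estimates, one per residue, yielding $\deg_1(f^n) = O(n^{2(d-1)})$.

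All the genuine work has been carried out in \cref{thm:positivity}; the present corollary is essentially bookkeeping, and the hard part lies entirely in establishing that theorem. The two minor subtleties here — preservation of Jordan block sizes under iteration of a quasi-unipotent operator, and passage from the subsequence $n \in m\bN$ to all of $\bN$ via residues — are both standard, so there is no real obstacle beyond \cref{thm:positivity} itself.
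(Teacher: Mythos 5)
Your argument is correct and follows essentially the same route as the paper: reduce to the unipotent case (the paper packages this as \cref{lemma:deg-growth-invariant-under-iteration}, you argue it inline via residue classes), invoke \cref{thm:positivity} to get $r \le d-1$, and then read off the degree bound from the binomial expansion of $(f^n)^*H_X$. The only minor difference is at the end: the paper additionally cites \cref{prop:zero-case}~\ref{prop:zero-case-3} to get $N^{2r}H_X \cdot H_X^{d-1}>0$ and hence the sharper asymptotic $\deg_1(f^n) \sim C n^{2r}$, whereas you simply bound the degree of the polynomial by $2r$, which already suffices for the stated $O(n^{2(d-1)})$ estimate.
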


The paper is organized as follows.
We first quote a useful lemma (see \cref{lemma:plov}) in \S\ref{subsec upper bound plov}, which gives a natural upper bound of $\plov$.
We then introduce weak numerical equivalence in \S\ref{subsec weak num equiv} and weak (semi)positivity in \S\ref{subsec weak positivity}.
Based on this, we construct explicit weakly positive classes and establish a strengthened version of \cref{thm:positivity} in \S\ref{sec upper bound k}, namely \cref{thm:full-version-positivity}.
In \S\ref{sec rank}, we define a series of matrices $A_{k,d,n}$ associated with restricted partitions and prove their full rank using representation theory of the Lie algebra $\mathfrak{sl}_2(\bC)$. This turns out to be the main ingredient in the proof of \cref{thm:plov}, which is presented in \S\ref{subsec proofs plov}.
Finally, in \S\ref{subsec examples}, we discuss the optimality of \cref{thm:plov} and explore possible extensions.

\section{Preliminaries}
\label{sec prel}

Throughout, unless otherwise stated, we work over an algebraically closed field $\bk$ of arbitrary characteristic, $X$ is a normal projective variety of dimension $d$ and $H_X$ is a fixed ample divisor on $X$, over $\bk$.
All {\it divisors} are assumed to be $\bR$-Cartier. 
We refer to Fulton \cite{Fulton98} for intersection theory on (possibly singular) algebraic varieties. We will mainly focus on intersections of divisors instead of arbitrary cycles.

\subsection{Upper bounds of \texorpdfstring{$\plov$}{plov} via intersection numbers}
\label{subsec upper bound plov}

Let $f$ be an automorphism of $X$.
Suppose that $f^*|_{\N^1(X)_\bR} = \id + N$ is unipotent, where $N$ is a nilpotent operator on $\N^1(X)_\bR$ satisfying that $N^k\neq 0$ and $N^{k+1}=0$ for some $k\in \bN$.
By the projection formula, we then have that
\[
\Vol(\Gamma_n) \coloneqq \Gamma_n \cdot \left(\sum_{i=1}^{n}\pr_i^*H_X\right)^d = \Delta_n(f,H_X)^d,
\]
where $\pr_i \colon X^n \to X$ is the projection to the $i$-th factor and
\begin{align*}
\Delta_n(f, H_X) &\coloneqq \sum_{m=0}^{n-1} (f^m)^*H_X = \sum_{m=0}^{n-1} \sum_{i=0}^{m} \binom{m}{i} N^i H_X \\
&= \sum_{i=0}^{n-1} \sum_{m=i}^{n-1} \binom{m}{i} N^i H_X = \sum_{i=0}^{k} \binom{n}{i+1} N^i H_X.
\end{align*}
Using this expression, the authors of \cite{LOZ} have obtained the following result on $\plov$.
See also \cite[Proof of Lemma~6.13]{Keeler00}.

\begin{lemma}[{cf.~\cite[Lemma~2.16]{LOZ}}]
\label{lemma:plov}
With notation as above, $\plov(f)$ is equal to the degree of the following polynomial (with indeterminate $n$)
\begin{align*}
\Delta_n(f, H_X)^d &= \Bigg( \sum_{i=0}^k \binom{n}{i+1} N^iH_X \Bigg)^d \\
&=\sum_{e_0+e_1+\cdots+e_k=d} \, \frac{d!}{e_0!e_1!\cdots e_k!} \, \prod_{i=0}^k \binom{n}{i+1}^{e_i} (N^{i}H_X)^{e_i}.
\end{align*}
In particular, one has that
\begin{align}\label{eq:plov-monomial}
\plov(f) \le d+\max\Bigg\{ \sum_{i=0}^k ie_i \, : \, \prod_{i=0}^k (N^{i}H_X)^{e_i}\neq 0, \, \sum_{i=0}^k e_i=d, \, e_i\in \bN \Bigg\}.
\end{align}
\end{lemma}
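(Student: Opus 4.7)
The plan is to derive the equality $\plov(f)=\deg_n \Delta_n(f,H_X)^d$ directly from the expansion already computed in the excerpt, and then read off the upper bound \eqref{eq:plov-monomial} from the multinomial expansion.

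First I would record that, by construction, $\Vol(\Gamma_n)=\Delta_n(f,H_X)^d$ and that, using the identity
\[
\Delta_n(f,H_X)=\sum_{i=0}^k \binom{n}{i+1}\,N^iH_X
\]
established just above the lemma, the multinomial theorem expresses $\Delta_n(f,H_X)^d$ as the claimed sum. Since each $\binom{n}{i+1}$ is a polynomial in $n$ of degree $i+1$ (with positive leading coefficient $1/(i+1)!$), the resulting expression $P(n)\coloneqq \Delta_n(f,H_X)^d$ lies in $\bR[n]$. The single monomial with multi-exponents $(e_0,\dots,e_k)$ (subject to $\sum e_i=d$) contributes a term of degree $\sum_{i=0}^k (i+1)e_i= d+\sum_{i=0}^k ie_i$ in $n$.

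Next I would identify $\plov(f)$ with $\deg P$. Because $H_X$ is ample and $f$ is an automorphism, each $(f^m)^*H_X$ is ample, hence so is the positive sum $\Delta_n(f,H_X)=\sum_{m=0}^{n-1}(f^m)^*H_X$ for every $n\ge 1$. Consequently $P(n)=\Delta_n(f,H_X)^d>0$ for all $n\ge 1$, i.e.\ $P$ has positive leading coefficient. Therefore
\[
\log P(n) = (\deg P)\,\log n + O(1) \quad\text{as } n\to\infty,
\]
and the definition of $\plov$ immediately gives $\plov(f)=\limsup_{n\to\infty}\log P(n)/\log n=\deg P$, which is the first assertion.

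Finally, for \eqref{eq:plov-monomial} I would bound $\deg P$ monomial-by-monomial. A monomial in the multinomial expansion is a scalar multiple of $\prod_{i=0}^k \binom{n}{i+1}^{e_i}$ times the intersection number $\prod_{i=0}^k (N^iH_X)^{e_i}$; those monomials whose intersection-number factor vanishes contribute nothing to $P$, while the rest have degree $d+\sum_i ie_i$ in $n$. Taking the maximum over the surviving multi-indices gives the stated upper bound $\plov(f)\le d+\max\{\sum_i ie_i : \prod_i(N^iH_X)^{e_i}\neq 0,\ \sum_i e_i=d\}$. The only subtlety worth flagging is that this is merely an upper bound rather than an equality, since cancellations among different nonvanishing monomials of the same $n$-degree cannot be excluded a priori; however, this is exactly why the authors only claim the inequality in \eqref{eq:plov-monomial} and why ruling out such cancellations (the content of \cref{thm:positivity} and the question raised as \cref{question:no-cancellation}) is a separate, harder matter.
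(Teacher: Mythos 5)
Your proof is correct, and it fills in exactly what the paper leaves implicit by citing \cite[Lemma~2.16]{LOZ} and \cite[Proof of Lemma~6.13]{Keeler00} rather than giving an argument. The only point worth reinforcing is the one you rightly emphasize: since each $(f^m)^*H_X$ is ample (as $f$ is an automorphism of the projective variety $X$), the sum $\Delta_n(f,H_X)$ is ample for every $n\ge 1$, so $P(n)=\Delta_n(f,H_X)^d>0$ there; this forces $P$ to be a nonzero polynomial with positive leading coefficient, whence $\log P(n)/\log n\to\deg P$ and the $\limsup$ in the definition of $\plov$ is an honest limit equal to $\deg P$. Your reading of the upper bound \eqref{eq:plov-monomial} as a bound on $\deg P$ monomial-by-monomial, together with your caveat that possible cancellation among surviving monomials of equal degree is what prevents this from being an a priori equality, matches the role the authors assign to \cref{question:no-cancellation} and \cref{thm:positivity}. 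No gaps.
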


By the above lemma, we are particularly interested in the (non)vanishing of the intersection numbers $N^{i_1}H_X \cdots N^{i_d} H_X$ or $(N^{k}H_X)^{e_k} \cdots (N H_X)^{e_1} \cdot H_X^{e_0}$ with $\sum_{i=0}^k e_i=d$.
Since there is no need to distinguish the order of indices $i_1,\ldots,i_d$ in these intersection numbers, it is natural to consider restricted partitions (see \S\ref{subsec partition} for details).
On the other hand, we ask:

\begin{question}
\label{question:no-cancellation}
Is inequality \eqref{eq:plov-monomial} an equality?
Namely, if $(N^{k}H_X)^{e_k} \cdots (N H_X)^{e_1} \cdot H_X^{e_0} \neq 0$ for some $e_i\in\bN$ with $\sum_{i=0}^k e_i=d$, then do we have $\plov(f)\geq d+\sum_{i=0}^k ie_i$?
\end{question}

\begin{remark}
\label{rmk:conj-lower-bound}
Denote $k=2r$ and assume that $r>0$.
Then by \cref{thm:positivity}, there are positive integers $t_i\in\bZ_{>0}$ such that $\sum_{i=0}^r t_i = d$ and $(N^{2r}H_X)^{t_r}\cdots (N^2 H_X)^{t_1} \cdot H_X^{t_0}>0$.
If the above \cref{question:no-cancellation} has an affirmative answer, then we have that
\[
\plov(f) \ge d+\sum_{i=0}^r 2it_i \ge d+\sum_{i=0}^r 2i = d+r(r+1),
\]
which provides an optimal lower bound for $\plov(f)$ (cf. \cite[Theorem~5.1]{LOZ}).
Note that by \cite{Hu-GK-AV}, for any given $0<r\leq d-1$, there exists an automorphism $f$ of an abelian variety of dimension $d$ such that $\deg_1(f^n) \sim Cn^{2r}$ and $\plov(f)=d+r(r+1)$; see also \cref{ex optimal}.
\end{remark}

\subsection{Weak numerical equivalence}
\label{subsec weak num equiv}

For any $0\le i\le d$, denote by $\Z_i(X)$ the free abelian group of algebraic cycles of dimension $i$ on $X$.
An algebraic cycle $Z\in \Z_i(X)$ is {\it numerically trivial} and denoted by $Z\num 0$, if $Z\cdot P(\sE_I)=0$ for all weight $i$ homogeneous polynomials $P(\sE_I)$ in Chern classes of vector bundles on $X$ (see \cite[Definition~19.1]{Fulton98}).
Let $\N_i(X)$ denote the group of $i$-cycles on $X$ modulo numerical equivalence $\num$, which is a free abelian group of finite rank (see \cite[Example~19.1.4]{Fulton98}).
Denote $\N_i(X)_\bR\coloneqq \N_i(X) \otimes_\bZ \bR$.

As in \cite{FL17a,Dang20,Hu20a}, we let $\Z^i(X) \coloneqq \Hom_\bZ(\Z_i(X),\bZ)$ and
\[
\N^i(X) \coloneqq \Hom_\bZ(\N_i(X),\bZ)
\]
be the abstract dual groups of $\Z_i(X)$ and $\N_i(X)$, respectively.
Denote $\N^i(X)_\bR\coloneqq \N^i(X) \otimes_\bZ \bR$
the associated finite dimensional $\bR$-vector space and endow it with the standard Euclidean topology.
There is a natural map $\N^i(X)_\bR \lra \N_{d-i}(X)_\bR$
which may not be an isomorphism, in general (see \cite[Example~2.8]{FL17a}).
Nonetheless, we have that $D_1\cdot D_2\cdots D_i\in \N^i(X)_\bR$ for divisors $D_1,\ldots,D_i$ on $X$.

Note that $\N^1(X)_\bR$ is isomorphic to the real N\'eron--Severi space of divisors modulo numerical equivalence (see \cite[Example~2.1]{FL17a}).
However, cycle classes of higher dimension and their dual remain mysterious and are difficult to deal with.
It turns out to be helpful to consider the following notion (implicitly) introduced by Zhang \cite{Zhang09b}.

\begin{definition}[Weak numerical equivalence]
\label{def:weak-numerical-equiv}
A cycle class $\gamma\in \N_i(X)_\bR$ is called {\it weakly numerically trivial} and denoted by $\gamma\wnum 0$, if
\[
\gamma\cdot H_1\cdots H_{i} = 0
\]
for all ample (and hence for all) divisors $H_1,\ldots,H_{i}$ on $X$.

A dual cycle class $\alpha\in \N^i(X)_\bR$ is called {\it weakly numerically trivial} (and denoted by $\alpha \wnum 0$), if its image $\phi(\alpha)\wnum 0$ in $\N_{d-i}(X)_\bR$.
\end{definition}

By definition, weak numerical equivalence is indeed weaker than numerical equivalence.
Let $\W^i(X)$ denote the quotient group of $\N^i(X)$ modulo weak numerical equivalence, i.e.,
\[
\W^i(X)\coloneqq \N^i(X)/\!\wnum.
\]
In particular, $\W^i(X)_\bR\coloneqq \W^i(X) \otimes_\bZ \bR$ is a finite dimensional $\bR$-vector space, endowed with the natural quotient topology.
We have the following natural maps induced by the intersection of divisors:
\[
\N^1(X)_\bR\times \cdots\times \N^1(X)_\bR \lra \N^i(X)_\bR\longsurjmap \W^i(X)_\bR.
\]

\begin{lemma}
\label{lemma:K1-trivial}
$\N^1(X)_\bR= \W^1(X)_\bR$.
\end{lemma}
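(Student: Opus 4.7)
The plan is to show that the natural surjection $\N^1(X)_\bR \twoheadrightarrow \W^1(X)_\bR$ has trivial kernel, equivalently that any $D \in \N^1(X)_\bR$ with $D \wnum 0$ must satisfy $D = 0$.

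First I would unravel \cref{def:weak-numerical-equiv}: the hypothesis $D \wnum 0$ means $\phi(D) \cdot H_1 \cdots H_{d-1} = 0$, i.e.\ $D \cdot H_1 \cdots H_{d-1} = 0$, for all ample divisors $H_1, \ldots, H_{d-1}$ on $X$. Since the ample cone is open in $\N^1(X)_\bR$ and the $(d-1)$-fold intersection product is multilinear in its arguments, a standard polarization argument extends this vanishing to
\[
D \cdot E_1 \cdots E_{d-1} = 0 \quad \text{for all } E_1, \ldots, E_{d-1} \in \N^1(X)_\bR.
\]
Fixing any ample class $H$, the two specializations $(E_1,E_2,\ldots,E_{d-1}) = (H,H,\ldots,H)$ and $(E_1,E_2,\ldots,E_{d-1}) = (D,H,\ldots,H)$ then yield simultaneously
\[
D \cdot H^{d-1} = 0 \quad \text{and} \quad D^2 \cdot H^{d-2} = 0.
\]

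Next I would invoke the higher-dimensional Hodge index inequality (a form of Khovanskii--Teissier), valid on any normal projective variety in arbitrary characteristic: for every $\bR$-Cartier divisor $D$ and every ample $H$,
\[
(D \cdot H^{d-1})^2 \,\geq\, (D^2 \cdot H^{d-2})\,(H^d),
\]
with equality if and only if $D \equiv c\,H$ for some $c \in \bR$. Both sides being zero in our situation, the equality case forces $D \equiv c\,H$, and then $0 = D \cdot H^{d-1} = c\,H^d$ together with $H^d > 0$ gives $c = 0$. Hence $D = 0$ in $\N^1(X)_\bR$.

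The only real obstacle is securing the Hodge index inequality in the precise generality needed: normal projective $X$ over an algebraically closed field of arbitrary characteristic. This is classical and can either be cited directly from the literature, or derived by cutting $X$ with $d-2$ general members of a very ample linear system $|mH|$ to obtain (via Bertini) an irreducible normal surface $S$, and then applying the classical Hodge index theorem on $S$ to $D|_S$ and $H|_S$ together with the identities $D|_S \cdot H|_S = m^{d-2}\,D \cdot H^{d-1}$ and $(D|_S)^2 = m^{d-2}\,D^2 \cdot H^{d-2}$. Given the intersection-theoretic machinery already in play in the paper, no essential difficulty arises.
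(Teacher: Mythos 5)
Your proof is correct and takes essentially the same route as the paper's. Both arguments reduce $D \wnum 0$ to the two vanishings $D \cdot H^{d-1} = 0$ and $D^2 \cdot H^{d-2} = 0$ (you via explicit polarization and specialization, the paper by unwinding \cref{def:weak-numerical-equiv} directly) and then conclude $D \equiv 0$ from the higher-dimensional Hodge index theorem; your Khovanskii--Teissier inequality with equality case is the same underlying statement that the paper cites from \cite[Expos\'e~XIII, Corollaire~7.4]{SGA6} and \cite[Proposition~2.9]{Hu20a}, so the packaging differs only cosmetically.
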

\begin{proof}
It follows from the higher dimensional Hodge index theorem; see \cite[Expos\'e~XIII, Corollaire~7.4]{SGA6} for $\bQ$-divisors and \cite[Proposition~2.9]{Hu20a} for $\bR$-divisors.
Precisely, let $D$ be a divisor such that $D\wnum 0$, i.e., $D\cdot D_1\cdots D_{d-1}=0$ for all divisors $D_1,\ldots,D_{d-1}$ on $X$, then
in particular $D\cdot H_1\cdots H_{d-1} = D^2\cdot H_1\cdots H_{d-2} = 0$ for all ample divisors $H_1,\ldots,H_{d-1}$ on $X$, which yields that $D\num 0$.
\end{proof}

For a divisor $D$, usually, $[D]$ represents its numerical class in $\N^1(X)_\bR$.
But by abuse of notation and the above lemma, we often simply write $D\in \N^1(X)_\bR$ or $\W^1(X)_\bR$.
Moreover, as we will be working over $\W^*(X)_\bR$, all intersections of divisor (classes) are considered to sit in $\W^*(X)_\bR$.

\begin{remark}\label{rem:WixW1}
It is worth mentioning that given any weakly numerically trivial classes $\alpha\in \N^i(X)_\bR$ and $\beta\in \N^j(X)_\bR$, we may not have $\alpha\cdot \beta\wnum 0$, since $\N^j(X)_\bR\to \W^j(X)_\bR$ may have a nontrivial kernel for $j\ge 2$.
So there is no natural intersection pairing $\W^i(X)\times \W^j(X)\to \W^{i+j}(X)$ for $i,j\ge 2$.
But $\W^i(X)\times \W^1(X)\to \W^{i+1}(X)$ is well defined by \cref{lemma:K1-trivial}.
\end{remark}

\subsection{Nef divisor product classes and weakly (semi)positive classes}
\label{subsec weak positivity}

In order to study the nonvanishing of intersection numbers of divisors, we introduce the following positivity notions which naturally arise when working in $\W^*(X)_\bR$.

\begin{definition}[Cone of nef divisor product classes]
\label{def:nef-product}
Let
\[
\NP^i(X)\coloneqq
\{D_1\cdots D_i\in \W^i(X)_\bR : D_1, \dots, D_i \in \Nef(X) \subset \N^1(X)_\bR\} 
\]
denote the cone of the intersections of $i$ nef divisor classes in $\W^i(X)_\bR$
and let $\NPbar^{i}(X)$ denote its closure in $\W^i(X)_\bR$. For convention, we formally set $\NP^0(X)=\NPbar^{0}(X):=\mathbf{R}_{\geq 0}\cdot [X]\subset \W^0(X)_\bR$.
\end{definition}

Note that $\NPbar^{i}(X)$ is not necessarily convex.

\begin{definition}[Weakly (semi)positive classes]
\label{def:weakly-positive}
A class $\alpha\in \W^i(X)_\bR$ is called {\it weakly positive} (resp. {\it weakly semipositive}), if $\alpha\cdot H_1\cdots H_{d-i}>0$ (resp. $\ge 0$) for any ample divisors $H_1, \dots, H_{d-i}$ on $X$.
It is denoted by $\alpha \wg 0$ (resp. $\alpha \wge 0$).
Here we recall that the intersection of a class in $\W^i(X)_\bR$ with ample divisors is well-defined by \cref{rem:WixW1}.
\end{definition}
For example, $\alpha\wge 0$ if $\alpha\in \NPbar^i(X)$ and we will often use this fact. 
\begin{lemma}
\label{lemma:alpha>0}
Let $\alpha\in \W^i(X)_\bR$ with $\alpha\wge 0$ and $\alpha\not \wnum 0$.
Then $\alpha\wg 0$.
\end{lemma}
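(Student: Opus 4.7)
The plan is to argue by contraposition: assume $\alpha\wge 0$ together with $\alpha\cdot H_1\cdots H_{d-i}=0$ for some ample divisors $H_1,\ldots,H_{d-i}$, and deduce $\alpha\wnum 0$, contradicting the hypothesis $\alpha\not\wnum 0$.

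The only technical input I need is that the ample cone $\Amp(X)\subset\N^1(X)_\bR$ is open. Hence for any divisor class $D\in\N^1(X)_\bR$ there exists $\delta>0$ such that $H_1+tD$ remains ample for $|t|<\delta$. Since $\alpha\wge 0$ and the intersection pairing $\W^{i}(X)_\bR\times(\N^1(X)_\bR)^{d-i}\to\bR$ is $\bR$-multilinear, for all such $t$,
\[
0\le\alpha\cdot(H_1+tD)\cdot H_2\cdots H_{d-i}=\alpha\cdot H_1\cdots H_{d-i}+t\bigl(\alpha\cdot D\cdot H_2\cdots H_{d-i}\bigr)=t\bigl(\alpha\cdot D\cdot H_2\cdots H_{d-i}\bigr).
\]
Allowing $t$ of either sign forces the linear coefficient to vanish, i.e.\ $\alpha\cdot D\cdot H_2\cdots H_{d-i}=0$ for every divisor class $D$.

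Next I would iterate this ``spreading'' of vanishings one slot at a time. Pick any ample class $A$; by the previous step $\alpha\cdot A\cdot H_2\cdots H_{d-i}=0$, and since $H_2$ is still ample, the same perturbation argument applied to the second slot yields $\alpha\cdot A\cdot D'\cdot H_3\cdots H_{d-i}=0$ for every divisor class $D'$. Multilinearity in the first slot, combined with the fact that ample classes span $\N^1(X)_\bR$, then removes the ampleness restriction on $A$, giving $\alpha\cdot D_1\cdot D_2\cdot H_3\cdots H_{d-i}=0$ for all divisor classes $D_1,D_2$. A straightforward induction on the number of slots cleared (in total $d-i$ steps) produces $\alpha\cdot D_1\cdots D_{d-i}=0$ for every tuple of divisor classes $D_1,\ldots,D_{d-i}$, i.e.\ $\alpha\wnum 0$, which is the desired contradiction.

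The proof is essentially formal; I do not anticipate any genuine obstacle. The only points worth double-checking are that the pairing is indeed well defined and multilinear on $\W^{\ast}(X)_\bR$ in the $\N^1$ slots (which is built into its construction) and that ample classes span $\N^1(X)_\bR$, both of which are standard.
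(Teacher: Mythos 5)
Your proof is correct, and it relies on the same essential fact as the paper's (openness of the ample cone in $\N^1(X)_\bR$), but the argument is structured differently. The paper proceeds directly: pick an ample tuple $H'_1,\dots,H'_{d-i}$ with $\alpha\cdot H'_1\cdots H'_{d-i}>0$ (possible since $\alpha\not\wnum 0$ and $\alpha\wge 0$); for an arbitrary ample tuple $H_1,\dots,H_{d-i}$, choose $\epsilon>0$ small enough that each $H_j-\epsilon H'_j$ is ample, expand multilinearly, and conclude in one step that $\alpha\cdot H_1\cdots H_{d-i}\ge\epsilon^{d-i}\alpha\cdot H'_1\cdots H'_{d-i}>0$ since every cross-term is a nonnegative intersection with ample classes. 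You instead argue by contraposition and iterate a ``derivative vanishes'' perturbation: if some ample pairing is zero, two-sided perturbation in one slot forces that slot to annihilate $\alpha$ against all divisor classes, and a $(d-i)$-step induction (with a spanning step to move from ample to arbitrary classes in the already-cleared slots) propagates the vanishing to every tuple, giving $\alpha\wnum 0$. Both arguments are sound; the paper's one-shot inequality is shorter and avoids the induction, while your version is a bit more verbose but makes the linearity/rigidity mechanism explicit, and would generalize without change to any finite-dimensional multilinear pairing that is nonnegative on an open cone.
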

\begin{proof}
Since $\alpha\not \wnum 0$, by \cref{def:weak-numerical-equiv}, there are ample divisors $H'_1, H'_2, \dots, H'_{d-i}$ on $X$ such that $\alpha\cdot H'_1\cdots H'_{d-i}$ is nonzero, which is necessarily positive because $\alpha\wge 0$.
For any ample divisors $H_1, H_2, \dots, H_{d-i}$ on $X$, there is a positive constant $\epsilon\ll 1$ such that $H_j-\epsilon H'_j$ is ample for each $1\le j\le d-i$.
Again by $\alpha\wge 0$, we have
\begin{align*} 
\alpha\cdot H_1\cdots H_{d-i} \geq \epsilon^{d-i}\alpha\cdot H'_1\cdots H'_{d-i} >0.
\end{align*}
This proves that $\alpha\wg 0$.
\end{proof}

The following lemma is essentially reminiscent of \cite[Corollaire~3.5]{DS04} (see also \cite[Lemma~2.14]{Hu20a}).

\begin{lemma}
\label{lemma:HR bilinear}
Fix an integer $0\leq i\leq d-2$.
Let $D, D'$ be divisors on $X$. 
Let $\alpha\in \NPbar^{i}(X)$ such that
$0\not \wnum \alpha \cdot D\in \NPbar^{i+1}(X)$ and $\alpha \cdot D^2\wge 0$.
\begin{enumerate}[label=\emph{(\arabic*)}, ref=(\arabic*)]
\item \label{lemma:HR bilinear-1} If $\alpha \cdot D\cdot D'\cdot H_1\cdots H_{d-i-2}= 0$ for some ample divisors $ H_1, \dots, H_{d-i-2}$ on $X$, then \[\alpha\cdot D'^2\cdot H_1\cdots H_{d-i-2}\leq 0.\] 
Moreover, if the equality holds, then there exists a unique constant $c\in\bR$, depending on $H_1, \dots, H_{d-i-2}$, such that 
\[
\alpha\cdot (D'+cD)\cdot H_1\cdots H_{d-i-2} \wnum 0.
\]

\item \label{lemma:HR bilinear-2} If $\alpha \cdot D\cdot D'\wnum 0$, then $-\alpha\cdot D'^2\wge 0$.

\item \label{lemma:HR bilinear-3} If $\alpha \cdot D\cdot D'\wnum \alpha\cdot D'^2\wnum 0$, then there exists a unique constant $c\in\bR$ such that $\alpha\cdot (D'+cD) \wnum 0$.
\end{enumerate}
\end{lemma}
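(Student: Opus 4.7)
The plan is to reduce all three parts to the mixed Hodge--Riemann bilinear inequality for a codimension-$(d{-}2)$ class in $\NPbar^{d-2}(X)$, which is precisely the content of \cite[Corollaire~3.5]{DS04} and \cite[Lemma~2.14]{Hu20a}. I would fix ample divisors $H_1,\dots,H_{d-i-2}$, set $\beta\coloneqq \alpha\cdot H_1\cdots H_{d-i-2}\in \NPbar^{d-2}(X)$, and study the symmetric bilinear form $Q(X,Y)\coloneqq \beta\cdot X\cdot Y$ on $\N^1(X)_\bR$. The cited Hodge index result then says $Q$ has at most one positive eigenvalue modulo its radical (Lorentzian signature). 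Note that the hypothesis $0\not\wnum \alpha\cdot D\in \NPbar^{i+1}(X)$, combined with \cref{lemma:alpha>0}, upgrades to $\alpha\cdot D\wg 0$, so that $Q(D,H)=\alpha\cdot D\cdot H\cdot H_1\cdots H_{d-i-2}>0$ for every ample $H$. In particular, $D$ does not lie in the radical of $Q$.

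For (1), with $Q(D,D)\geq 0$ and $Q(D,D')=0$ given, I would split into two cases. If $Q(D,D)>0$, then $D$ spans a positive direction and $Q|_{D^\perp}$ is negative semidefinite with kernel equal to $\mathrm{rad}(Q)$; as $D'\in D^\perp$, this forces $Q(D',D')\leq 0$, with equality iff $D'\in \mathrm{rad}(Q)$. If $Q(D,D)=0$, then $D$ is isotropic but non-radical, and the Lorentzian signature forces $Q|_{D^\perp}$ to be negative semidefinite with kernel $\langle D\rangle+\mathrm{rad}(Q)$; hence again $Q(D',D')\leq 0$, with equality iff $D'\in \langle D\rangle+\mathrm{rad}(Q)$. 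In either case, equality produces some $c\in\bR$ with $D'+cD\in \mathrm{rad}(Q)$, i.e.\ $\alpha\cdot(D'+cD)\cdot H_1\cdots H_{d-i-2}\wnum 0$. Uniqueness of $c$ follows because $\alpha\cdot D\cdot H_1\cdots H_{d-i-2}\cdot H>0$ for every ample $H$, so the class $\alpha\cdot D\cdot H_1\cdots H_{d-i-2}$ is not weakly numerically trivial.

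Part (2) is then an immediate consequence: $\alpha\cdot D\cdot D'\wnum 0$ means $Q(D,D')=0$ for \emph{every} choice of ample $H_\bullet$, so (1) gives $Q(D',D')\leq 0$ for every such choice, i.e.\ $-\alpha\cdot D'^2\wge 0$. For (3), I would apply the equality case of (1) to each ample tuple $(H_1,\dots,H_{d-i-2})$ to extract a unique $c_{H_\bullet}\in\bR$ satisfying
\[
\alpha\cdot D'\cdot H_1\cdots H_{d-i-1} \;=\; -c_{H_\bullet}\,\alpha\cdot D\cdot H_1\cdots H_{d-i-1}
\]
for every ample $H_{d-i-1}$. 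Both sides are symmetric multilinear in $(H_1,\dots,H_{d-i-1})$, and the denominator is strictly positive on the ample cone, so $c_{H_\bullet}$ is simply the common ratio. Invoking symmetry of the intersection product under the transposition $H_{d-i-2}\leftrightarrow H_{d-i-1}$ (and iterating over the remaining indices) shows $c_{H_\bullet}$ is independent of $(H_1,\dots,H_{d-i-2})$, yielding a single universal $c$ with $\alpha\cdot(D'+cD)\wnum 0$; uniqueness is as in (1).

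The main technical subtlety will be the isotropic-but-non-radical case $Q(D,D)=0$ in (1), which is not covered by a naive Cauchy--Schwarz argument and genuinely requires the full Lorentzian signature modulo radical provided by the mixed Hodge--Riemann inequality; once that input is in hand, the remaining steps are essentially linear-algebra bookkeeping together with the symmetry of intersection products.
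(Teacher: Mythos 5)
Your proof is correct and reaches the same conclusions, but it packages the Hodge--index input differently from the paper. The paper fixes an auxiliary ample class $H$, works on the primitive hyperplane $P_H = \{D_2 : q(H,D_2)=0\}$ (on which $q$ is negative semidefinite by a continuity argument from the ample case), defines $c = -q(H,D')/q(H,D)$ so that $D'+cD \in P_H$, and then in the equality case runs a Cauchy--Schwarz argument \emph{inside} $P_H$ to show $D'+cD$ lies in the radical of $q$ --- thereby avoiding any case split on the sign of $q(D,D)$. You instead pass directly to the Lorentzian structure of $Q$ modulo its radical and analyze $D^\perp$, splitting into $Q(D,D)>0$ and $Q(D,D)=0$; this is a clean conceptual framing, but it requires you to justify (as you essentially do) the extra linear-algebra fact that in the isotropic non-radical case $\ker(Q|_{D^\perp}) = \langle D\rangle + \mathrm{rad}(Q)$, and you should also note that $Q(D,H)>0$ with $Q(D,D)=0$ already rules out $Q$ being negative semidefinite, so the form really does have a positive eigenvalue and the Lorentzian description applies. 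Both routes consume the same Hodge-index input (negative semidefiniteness on a primitive hyperplane, equivalently at most one positive eigenvalue modulo radical) and roughly the same amount of bookkeeping. For part (3), you extract the constant $c$ as the common value of the ratio $-\alpha\cdot D'\cdot H_1\cdots H_{d-i-1}\,/\,\alpha\cdot D\cdot H_1\cdots H_{d-i-1}$ and conclude by noting it is symmetric in all arguments and independent of the last one, hence constant; the paper phrases the same observation as applying the uniqueness in (1) to a shifted tuple of ample divisors. These are equivalent, and if anything yours is the more transparent formulation.
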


\begin{proof}
(1) For the given ample divisors $H_1, \dots, H_{d-i-2}$ on $X$, we define a symmetric bilinear form $q$ on $\N^1(X)_\bR$ as follows:
\begin{equation*}
q(D_1,D_2) \coloneqq D_1\cdot D_2 \cdot \alpha \cdot H_1 \cdots H_{d-i-2}, \textrm{ for any } D_1,D_2\in \N^1(X)_\bR.
\end{equation*}
Fix an auxiliary ample divisor $H$ on $X$.
Note that $\alpha \cdot D\not \wnum 0$ (and hence $\alpha \not \wnum 0$).
So by \cref{lemma:alpha>0}, we have
$q(D, H)>0$ and $q(H, H)>0$.
The primitive subspace $P_H \subset \N^1(X)_\bR$ associated with $q$ and $H$ is defined by
\[
P_H \coloneqq \{D_2\in \N^1(X)_\bR : q(H,D_2) = 0\},
\]
which is a hyperplane in $\N^1(X)_\bR$.

We claim that $q$ is negative semidefinite on $P_H$. In fact, if $\alpha$ is a product of ample divisors on $X$, then the bilinear form $q$ is negative definite on $P_H$ by the higher dimensional Hodge index theorem (see \cite[Expos\'e~XIII, Corollaire~7.4]{SGA6} or \cite[Proposition~2.9]{Hu20a}).
As $\alpha\in \NPbar^{i}(X)$, by continuity, $q$ is negative semidefinite on $P_H$.

Since $q(D, H)>0$, $D$ is not in the hyperplane $P_H$.
Hence there is a unique constant $c\in \bR$ such that $D'+cD\in P_H$,
precisely, $c=-q(H,D')/q(H,D)$.
So by the assumption that $q(D, D')=0$ and the negative semidefiniteness, one has
\begin{align}\label{eq:D+cD'<=0}
q(D', D')+c^2q(D, D)=q(D'+cD, D'+cD)\leq 0.
\end{align}
On the other hand, $q(D, D)\geq 0$ by $\alpha \cdot D^2\wge 0$. So $q(D', D')\leq 0$. This proves the first half part of Assertion~\ref{lemma:HR bilinear-1}.

We now suppose that $q(D', D')= 0$.
Then \cref{eq:D+cD'<=0} implies that $q(D'+cD, D'+cD)=0$.
To finish the proof, it suffices to show that $q(D'+cD, D_2)=0$ for any divisor $D_2$ on $X$.

Given any $D_2$, since $q(H, H)>0$, as above, we may take $e=-q(H, D_2)/q(H, H)$ such that $D_2+eH\in P_H$.
So by the Cauchy--Schwarz inequality for $q|_{P_H}$,
\[
q(D'+cD,D_2+eH)^2 \le q(D'+cD,D'+cD) \, q(D_2+eH,D_2+eH) = 0.
\]
This implies that $q(D'+cD, D_2)=q(D'+cD,D_2+eH)=0$ where the first equality follows from $D'+cD\in P_H$.

(2) It follows directly from Assertion~\ref{lemma:HR bilinear-1}.

(3) By assumption and Assertion~\ref{lemma:HR bilinear-1}, for any ample divisors $H_0, H_1, \dots, H_{d-i-2}$ on $X$, there is a unique constant $c\in\bR$, depending on $H_1, \dots, H_{d-i-2}$ but not on $H_0$, such that 
\begin{align}\label{qe:c1}
\alpha\cdot (D'+cD) \cdot H_0\cdot H_1\cdots H_{d-i-3}\cdot H_{d-i-2}=0. 
\end{align}
It suffices to show that $c$ is independent of $H_1, \dots, H_{d-i-2}$.
Let $H'_{d-i-2}$ be an arbitrary ample divisor on $X$. Applying Assertion~\ref{lemma:HR bilinear-1} to ample divisors $H_1, \dots, H_{d-i-3}, H'_{d-i-2}$, there is a unique constant $c'\in\bR$, depending on $H_1, \dots, H_{d-i-3}, H'_{d-i-2}$ but not on $H_0$, such that
\begin{align}\label{qe:c2}
\alpha\cdot (D'+c'D) \cdot H_0\cdot H_1\cdots H_{d-i-3}\cdot H'_{d-i-2}=0.\end{align}
Comparing \cref{qe:c1} with \cref{qe:c2}, we can apply the uniqueness in Assertion~\ref{lemma:HR bilinear-1} to ample divisors $H_0, H_1, \dots, H_{d-i-3}$ to get $c=c'$. This shows that $c$ is independent of the choice of $H_{d-i-2}$, and hence of the choice of all $H_j$. 
\end{proof}

\section{An upper bound for polynomial growth rate of degree sequences}
\label{sec upper bound k}

In this section we study the polynomial growth rate of degree sequences and prove \cref{thm:positivity} and \cref{cor:degree-growth-upper-bound}.

\subsection{Construction of weakly positive nef divisor products}
\label{subsec construction}

The main result in this subsection is the following \cref{thm:full-version-positivity}, which strengthens \cref{thm:positivity} and particularly asserts that the product $N^{2r}H_X\cdot N^{2r-2}H_X\cdots N^2H_X\cdot H_X \not\wnum 0$ in $\W^{r+1}(X)_\bR$.

We will always keep the following assumption, which is exactly the same as in \cref{thm:positivity}.

\begin{assumption}\label{assum:XfN}
Let $X$ be a normal projective variety of dimension $d$ and $H_X$ a fixed ample divisor on $X$, over $\bk$.
Let $f$ be an automorphism of $X$ such that $f^*|_{\N^1(X)_\bR}=\id+N$ is unipotent with $N^{k}\neq 0$ and $N^{k+1}=0$, where $k=2r$ is a positive even integer.
\end{assumption}

\begin{theorem}
\label{thm:full-version-positivity}
Keep the notation in \cref{assum:XfN}. Then there are positive integers $t_r, t_{r-1}, \dots, t_1$ with $t_r+t_{r-1}+\cdots+t_1<d$ satisfying the following property.

Denote $s_{0}\coloneqq 0$, $M_0\coloneqq [X]$,
and for $1\leq j\leq r$,
$s_j\coloneqq t_r+t_{r-1}+\cdots+t_{r-j+1}$,
\[
M_{j}\coloneqq (N^{2r}H_X)^{t_r}\cdots (N^{2r-2j+2}H_X)^{t_{r-j+1}}\in \W^{s_j}(X)_\bR.
\]
Then for each $0\leq j\leq r-1$, the following assertions hold: 
\begin{enumerate}[label=\emph{(\arabic*)}, ref=(\arabic*)]
\item \label{condition NPbar}
$M_j\cdot (N^{2r-2j}H_X)^{i}\in \NPbar^{s_{j}+i}(X)$ for any $0\leq i\leq t_{r-j}$;

\item \label{condition f-invariant}
$f^*(M_j\cdot (N^{2r-2j}H_X)^{i})\wnum M_j \cdot (N^{2r-2j}H_X)^{i}$ for any $0\leq i\leq t_{r-j}$;

\item \label{condition NNN ample}
$M_j\cdot (N^{2r-2j}H_X)^{i}\wg 0$ for any $0\leq i\leq t_{r-j}$; 

\item \label{condition N2r-2j-1 =0}
$M_{j+1}\cdot N^{s}H_X\wnum 0$ for all $s\geq 2r-2j-1$, where $M_{j+1}=M_j\cdot (N^{2r-2j}H_X)^{t_{r-j}}$;

\item \label{condition N2r-2j-1^2 <0}

$
-M_j\cdot (N^{2r-2j}H_X)^{t_{r-j}-1}\cdot (N^{2r-2j-1}H_X)^2 \wg 0;
$
 
\item \label{condition N2r-2j-2 >0}
$M_{j+1}\cdot N^{2r-2j-2}H_X\wg 0$.
\end{enumerate}
\end{theorem}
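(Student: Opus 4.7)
The strategy is induction on $j$ from $0$ to $r-1$. Given $M_j$ satisfying the required properties at step $j-1$, I would define $t_{r-j}$ to be the largest positive integer such that $M_j\cdot (N^{2r-2j}H_X)^{t_{r-j}}\not\wnum 0$ in $\W^{s_{j+1}}(X)_\bR$; this maximum is finite by codimension and positive thanks to property~(6) at step $j-1$. For the base case $j=0$ I take $M_0=[X]$; then $N^{2r}H_X=\lim_{n\to\infty} f^{*n}H_X/\binom{n}{2r}$ is a nef class (using $N^{2r+1}=0$ and ampleness of $f^{*n}H_X$), and $N^{2r}H_X\not\wnum 0$ after possibly replacing $H_X$ by a generic ample.

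Properties~(1) and~(2) follow from a standard limiting argument. Using the inductive vanishing $M_j\cdot N^sH_X\wnum 0$ for $s\geq 2r-2j+1$ (property~(4) at step $j-1$), the expansion of $M_j\cdot (f^{*n}H_X)^i$ truncates to $N$-powers at most $2r-2j$, and dividing by $\binom{n}{2r-2j}^i$ and sending $n\to\infty$ realizes $M_j\cdot (N^{2r-2j}H_X)^i$ as a limit of classes in $\NPbar^{s_j+i}(X)$, using that $\wnum$-vanishing propagates under intersection with arbitrary divisors via the ample-minus-ample decomposition. The $f^*$-invariance follows from $f^*N^{2r-2j}H_X = N^{2r-2j}H_X+N^{2r-2j+1}H_X$ with the second term absorbed by the prior vanishing. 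Property~(3) then follows from \cref{lemma:alpha>0}: by~(1) we sit in $\NPbar$, and the maximality of $t_{r-j}$ combined with the fact that intersection with classes in $\NPbar$ preserves $\wnum$-vanishing gives $\not\wnum 0$ for all $i\leq t_{r-j}$. Property~(4) for $s=2r-2j$ is by definition of $t_{r-j}$; larger $s$ follow by iterating $f^*$-invariance, applying $f^*$ to $M_{j+1}\cdot N^sH_X\wnum 0$ and using $f^*N^sH_X=N^sH_X+N^{s+1}H_X$.

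The main obstacle is property~(4) at $s=2r-2j-1$ together with property~(5). Set $\alpha=M_j\cdot (N^{2r-2j}H_X)^{t_{r-j}-1}$, $D=N^{2r-2j}H_X$, and $D'=N^{2r-2j-1}H_X$; then $\alpha\cdot D=M_{j+1}\not\wnum 0$, $\alpha\cdot D^2\wnum 0$, and $\alpha\cdot N^{2r-2j+1}H_X\wnum 0$. The key rigidity to exploit is: if $\alpha\cdot(D'+cD)\wnum 0$ for some $c\in\bR$, then applying $f^*$ and using $f^*D'=D'+D$ together with $f^*D=D+N^{2r-2j+1}H_X$ (the latter absorbed by $\alpha\cdot N^{2r-2j+1}H_X\wnum 0$) yields $\alpha\cdot(D'+(1+c)D)\wnum 0$; subtracting gives $\alpha\cdot D\wnum 0$, contradicting $\alpha\cdot D=M_{j+1}\not\wnum 0$. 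To then prove $\alpha\cdot D\cdot D'\wnum 0$, I would analyze the bilinear form $q(A,B):=\alpha\cdot A\cdot B\cdot H_1\cdots H_{d-s_{j+1}-1}$ on $\N^1(X)_\bR$ for arbitrary ample $H_i$; it satisfies $q(D,D)=0$ and $q(D,H)>0$ for any ample $H$, and is negative semidefinite on the primitive hyperplane $P_H=\{A:q(H,A)=0\}$ by \cref{lemma:HR bilinear}\ref{lemma:HR bilinear-1}. Combining the signature constraint on $q$ (at most one positive eigenvalue) on the three-dimensional span $\langle D,D',H\rangle$ with the $f^*$-equivariant identities arising from $f^*D'=D'+D$ and $f^*$-invariance of $\alpha$ should force $q(D,D')=0$, establishing property~(4) at $s=2r-2j-1$. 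Then \cref{lemma:HR bilinear}\ref{lemma:HR bilinear-2} yields $-\alpha\cdot D'^2\wge 0$, the rigidity rules out the equality case from \cref{lemma:HR bilinear}\ref{lemma:HR bilinear-3}, and \cref{lemma:alpha>0} upgrades this to property~(5).

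Property~(6) follows from a limit argument analogous to property~(1), combined with the newly strengthened vanishings from property~(4), placing $M_{j+1}\cdot N^{2r-2j-2}H_X$ in $\NPbar^{s_{j+1}+1}(X)$; strict positivity should follow from coupling property~(5) with a Cauchy--Schwarz step on the Hodge-index form, since otherwise another application of the rigidity argument would contradict $\alpha\cdot D\not\wnum 0$. The sum bound $\sum_i t_i<d$ (equivalently $r\leq d-1$) is then forced because property~(6) at the final step $j=r-1$ requires $s_r<d$ for the corresponding weakly-positive statement to be nonvacuous. The most delicate point of the whole argument is the combined signature/$f^*$-equivariance step pinning down $q(D,D')=0$; executing this cleanly will likely require a careful choice of auxiliary amples and an argument in spirit similar to, but independent of, the dynamical filtrations of \cite{DLOZ22}.
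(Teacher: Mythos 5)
Your overall framework is correct and matches the paper: induction on $j$, the same definition of $t_{r-j}$ as the largest power keeping $M_j\cdot(N^{2r-2j}H_X)^{t}$ nonvanishing modulo $\wnum$, the limit argument for property~(1), and the $f^*$-rigidity argument (applying $f^*$ to a hypothetical $\alpha\cdot(D'+cD)\wnum 0$ and subtracting) for excluding the degenerate case in~(5). All of that is essentially what the paper does.

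However, there is a genuine gap in your treatment of the key vanishing $M_{j+1}\cdot N^{2r-2j-1}H_X\wnum 0$ (property~(4) at $s=2r-2j-1$) and, consequently, of property~(6). You propose to deduce $q(D,D')=0$ from the signature constraint on $q$ (one positive eigenvalue, negative semidefinite on $P_H$) combined with $f^*$-equivariance. This does not close: on the span $\langle D,D'\rangle$ the matrix of $q$ is $\bigl(\begin{smallmatrix}0 & c\\ c & e\end{smallmatrix}\bigr)$ with $c=q(D,D')$, which has signature $(1,1)$ whenever $c\neq 0$, and this is perfectly consistent with $q$ having exactly one positive eigenvalue globally. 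The $f^*$-equivariance also does not directly constrain $q(D,D')$: using $q_m(A,B)=q((f^{-m})^*A,(f^{-m})^*B)$ (valid since $\alpha$ is $f^*$-invariant) one only finds $q_m(D,D')=q(D,D')$ and $q_m(D',D')=q(D',D')-2m\,q(D,D')$, with no contradiction in sight. You yourself flag this step as unresolved.

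The missing idea in the paper is a \emph{positive polynomial} argument (the paper's Lemmas~3.4--3.5). Set
\[
Q(m)\coloneqq M_j\cdot\bigl((f^m)^*H_X\bigr)^{t_{r-j}+1}\cdot H_1\cdots H_{d-s_{j+1}-1}.
\]
Because $f$ is an automorphism and $M_j\wg 0$, $Q(m)>0$ for all $m\in\mathbf{Z}$; hence $Q$ has even degree and positive leading coefficient. Expanding $(f^m)^*H_X=\sum_i\binom{m}{i}N^iH_X$ and using the inductive vanishings, the putative top coefficient at degree $(2r-2j)(t_{r-j}+1)$ is a positive multiple of $M_j\cdot(N^{2r-2j}H_X)^{t_{r-j}+1}\cdot H_1\cdots$, which vanishes by definition of $t_{r-j}$. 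Evenness of $\deg Q$ then forces a drop by two, killing the coefficient at degree $(2r-2j)(t_{r-j}+1)-1$, and that coefficient is a positive multiple of $M_{j+1}\cdot N^{2r-2j-1}H_X\cdot H_1\cdots$; this is~(4). The next coefficient down, at degree $(2r-2j)(t_{r-j}+1)-2$, is nonnegative and is a positive multiple of $M_{j+1}\cdot N^{2r-2j-2}H_X\cdot H_1\cdots$ plus a positive multiple of $M_j\cdot(N^{2r-2j}H_X)^{t_{r-j}-1}\cdot(N^{2r-2j-1}H_X)^2\cdot H_1\cdots$; combined with the strict negativity of the latter from~(5), this gives~(6). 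The same polynomial also yields $s_{j+1}<d$ at each step: if $s_{j+1}=d$, then by the projection formula and $f^*$-invariance of $M_j$, $M_j\cdot((f^m)^*H_X)^{t_{r-j}}$ is constant in $m$, forcing the limit $M_j\cdot(N^{2r-2j}H_X)^{t_{r-j}}$ to vanish, a contradiction. Your ``by codimension'' and ``nonvacuousness of~(6)'' remarks for the bound $\sum_i t_i<d$ do not substitute for this.
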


\begin{remark}
\label{rmk:idea-positivity}
\cref{thm:positivity} is a simplified version of the above \cref{thm:full-version-positivity}.
The idea of its proof is a mixture of the proofs of \cref{thm:plov} and \cite[Theorem~1.1]{DLOZ22}.
Note that \cref{thm:plov} follows essentially from the vanishing of intersection numbers $N^{i_1}H_X \cdots N^{i_d} H_X$ with $\sum_{j=1}^d i_j > dr$ by \cref{lemma:plov}.
So it is natural to consider the nonvanishing of these intersection numbers (see \cref{question:no-cancellation}).
Towards this, we prove the weak positivity of certain products $(N^{2r}H_X)^{t_r}\cdot (N^{2r-2}H_X)^{t_{r-1}}\cdots (N^{2r-2j}H_X)^{t_{r-j}}$ (see \cref{def:weakly-positive}).
It turns out that these products explicitly form a quasi-nef sequence as introduced in \cite{Zhang09b}; however, we now do not need any knowledge of dynamical filtrations developed in \cite[\S3]{DLOZ22}.
\end{remark}

We shall repeatedly use the following two lemmas in the proof of \cref{thm:full-version-positivity}. They describe the polynomial behavior of intersection numbers of divisors under certain pullbacks by $f^*$.

\begin{lemma}
\label{lemma:polynomial}
Keep the notation in \cref{assum:XfN}. Let $\beta\in \W^i(X)_\bR$ with $\beta\wg 0$.
Fix an integer $l$ with $0\le l\le d-i$ and ample divisors $H_1, \dots, H_{d-i-l}$ on $X$.
Consider the intersection number
\[
P(m)=\beta\cdot ((f^m)^*H_X)^l\cdot H_1\cdots H_{d-i-l}, \quad m\in\bZ.
\]
Then the following assertions hold.
\begin{enumerate}[label=\emph{(\arabic*)}, ref=(\arabic*)]
\item \label{assert 1 of lemma:polynomial} $P(m)$ is a positive polynomial in $m$.

\item \label{assert 2 of lemma:polynomial}
The leading coefficient of $P(m)$ is positive.

\item \label{assert 3 of lemma:polynomial}
The degree of $P(m)$ is even. 
 
\end{enumerate}
\end{lemma}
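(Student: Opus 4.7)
The plan is to exploit the unipotency of $f^*|_{\N^1(X)_\bR}$ to get polynomiality of $P(m)$, and then to combine weak positivity of $\beta$ with the fact that $(f^m)^*H_X$ is ample for every integer $m$ (not just positive ones) to force strict positivity of $P(m)$ at every integer. The three assertions follow in succession.

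First I would expand $(f^m)^* = (\id+N)^m = \sum_{j=0}^{k} \binom{m}{j}\, N^j$ on $\N^1(X)_\bR$. This identity holds for every $m \in \bZ$ because $N$ is nilpotent: for $m \ge 0$ it is the usual binomial theorem (all terms with $j > k$ vanish by $N^{k+1}=0$), and for $m < 0$ it follows from $(\id+N)^{-1} = \sum_{j=0}^{k}(-N)^j$ together with the generalized binomial coefficients $\binom{m}{j}=m(m-1)\cdots(m-j+1)/j!$. Substituting into $((f^m)^*H_X)^l$, expanding multilinearly, and intersecting with $\beta\cdot H_1\cdots H_{d-i-l}$ yields
\[
P(m) = \sum_{0\le j_1,\ldots,j_l\le k} \binom{m}{j_1}\cdots\binom{m}{j_l}\cdot \beta\cdot N^{j_1}H_X \cdots N^{j_l}H_X \cdot H_1 \cdots H_{d-i-l},
\]
which is manifestly a polynomial in $m$ of degree at most $kl$ with coefficients in $\bR$.

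Next I would verify strict positivity at every integer $m \in \bZ$. Since $f^m$ is an automorphism of $X$, the pullback $(f^m)^*H_X$ is an ample divisor. Hence \cref{def:weakly-positive}, applied to the $d-i$ ample divisors $(f^m)^*H_X$ (repeated $l$ times) and $H_1,\dots,H_{d-i-l}$, together with $\beta \wg 0$, immediately gives
\[
P(m) = \beta\cdot ((f^m)^*H_X)^l\cdot H_1\cdots H_{d-i-l} > 0.
\]
This establishes assertion~\ref{assert 1 of lemma:polynomial}: $P(m)$ is a polynomial that takes strictly positive values at every integer.

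Assertions~\ref{assert 2 of lemma:polynomial} and~\ref{assert 3 of lemma:polynomial} are then formal consequences of positivity at all of $\bZ$. A nonzero polynomial that is positive for all sufficiently large $m>0$ must have positive leading coefficient, giving~\ref{assert 2 of lemma:polynomial}; and a polynomial with positive leading coefficient of odd degree would satisfy $P(m)\to -\infty$ as $m\to -\infty$, contradicting positivity of $P$ at arbitrarily negative integers. Hence the degree must be even, giving~\ref{assert 3 of lemma:polynomial}. There is no serious obstacle here; the argument is entirely mechanical once one observes that the automorphism assumption is used precisely to make $(f^m)^*H_X$ ample for negative $m$ as well, which is what couples positivity at $+\infty$ and at $-\infty$ and thereby forces evenness of the degree.
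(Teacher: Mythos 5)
Your proof is correct and takes essentially the same approach as the paper's: expand $(f^m)^*H_X$ via the binomial theorem for the nilpotent $N$, observe that $(f^m)^*H_X$ is ample for every integer $m$ because $f$ is an automorphism so that $\beta\wg 0$ forces $P(m)>0$ on all of $\bZ$, and then read off the positive leading coefficient and even degree. The only difference is cosmetic: you spell out the verification of $(\id+N)^m = \sum_{j=0}^{k}\binom{m}{j}N^j$ for $m<0$ and the contradiction at $m\to-\infty$, which the paper leaves implicit.
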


\begin{proof}
Note that $(f^m)^*H_X = (\id+N)^mH_X \num \sum_{j=0}^{2r} \binom{m}{j}N^jH_X$.
So
\begin{align}
\label{eq:poly coeff}
\begin{split}
P(m) &= \beta\cdot ((f^m)^*H_X)^l\cdot H_1\cdots H_{d-i-l} \\
&= \sum_{0\leq j_1, \dots, j_l\leq 2r}\binom{m}{j_1}\cdots \binom{m}{j_l} \ \beta\cdot N^{j_1}H_X\cdots N^{j_l}H_X \cdot H_1\cdots H_{d-i-l}
\end{split}
\end{align}
is a polynomial in $m$.
Since $f$ is an automorphism and $\beta\wg 0$ by assumption, $P(m)$ is always positive.
So the degree of $P(m)$ is even and the leading coefficient of $P(m)$ is positive. 
\end{proof}

We remark that the degree of $P(m)$ is independent of the choice of $H_X, H_1, \dots, H_{d-i-l}$. The proof is simply by comparing intersection numbers as in the proof of \cref{lemma:alpha>0}, but we do not need this fact in the paper, so we omit the details.

In practice, we will frequently use the expanded \cref{eq:poly coeff} to compute the degree and coefficients of $P(m)$ by the following more precise lemma in different situations.
Though its proof is straightforward by a direct computation, we include it for the reader's convenience.

\begin{lemma}
\label{lemma:polynomial2}
Keep the assumption in \cref{lemma:polynomial}. 
Suppose further that $l>0$ and there exists an integer $j$ with $0\leq j\leq r-1$ such that $\beta\cdot N^{s}H_X\wnum 0$ for all $s\geq 2r-2j+1$.
Then the following assertions hold.
\begin{enumerate}[label=\emph{(\arabic*)}, ref=(\arabic*)]
\item \label{assert 1 of coeff lemma} $\deg P(m)\leq (2r-2j)l$.
\item \label{assert 2 of coeff lemma} The coefficient of the term $m^{(2r-2j)l}$ in $P(m)$ is a positive multiple of
\begin{align}\label{eq:leading coeff of P}
 \beta\cdot (N^{2r-2j}H_X)^{l}\cdot H_1\cdots H_{d-i-l},
\end{align}
which is nonnegative.

\item \label{assert 3 of coeff lemma} $\deg P(m)= (2r-2j)l$ if and only if \eqref{eq:leading coeff of P} is nonzero.

\item \label{assert 4 of coeff lemma} If $\deg P(m)<(2r-2j)l$, then $\deg P(m)\leq (2r-2j)l-2$, and 
\begin{align}\label{eq:leading coeff of P-1}
 \beta\cdot (N^{2r-2j}H_X)^{l-1}\cdot N^{2r-2j-1}H_X\cdot H_1\cdots H_{d-i-l}=0.
\end{align}
Furthermore, the coefficient of the term $m^{(2r-2j)l-2}$ in $P(m)$ is a positive multiple of \begin{align}\label{eq:leading coeff of P-2}
\begin{split}
{} & \beta\cdot (N^{2r-2j}H_X)^{l-1}\cdot N^{2r-2j-2}H_X\cdot H_1\cdots H_{d-i-l} \\
+{} \ & \frac{(l-1)(r-j)}{2r-2j-1}\ \beta\cdot (N^{2r-2j}H_X)^{l-2}\cdot (N^{2r-2j-1}H_X)^2\cdot H_1\cdots H_{d-i-l},
\end{split}
\end{align}
which is nonnegative. Here the second term in \eqref{eq:leading coeff of P-2} does not appear if $l=1$. 
\end{enumerate}
\end{lemma}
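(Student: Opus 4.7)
The plan is to exploit the explicit expansion from \cref{lemma:polynomial},
\[
P(m)=\sum_{0\le j_1,\ldots,j_l\le 2r}\binom{m}{j_1}\cdots\binom{m}{j_l}\cdot\beta\cdot N^{j_1}H_X\cdots N^{j_l}H_X\cdot H_1\cdots H_{d-i-l},
\]
and to extract high-order coefficients in $m$ by hand. The first key observation is a truncation: if some $j_a\ge 2r-2j+1$, then the associated intersection number vanishes. Indeed, fix such an $a$ and write each of the remaining classes $N^{j_b}H_X$ ($b\ne a$) as a difference of two ample divisors on $X$; expanding yields a signed sum of intersection numbers of the form $\beta\cdot N^{j_a}H_X\cdot A_1\cdots A_{d-i-1}$ with $A_\bullet$ ample, each of which is zero by the hypothesis $\beta\cdot N^{j_a}H_X\wnum 0$. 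Hence the sum effectively runs over multi-indices in $\{0,\ldots,2r-2j\}^l$.

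Because $\binom{m}{s}$ is a polynomial of degree $s$ in $m$ with leading coefficient $1/s!$, each surviving term has degree at most $(2r-2j)l$ in $m$, which establishes \ref{assert 1 of coeff lemma}. The unique multi-index saturating the bound is $(2r-2j,\ldots,2r-2j)$, whose contribution to the $m^{(2r-2j)l}$ coefficient is $\frac{1}{((2r-2j)!)^l}$ times the expression \eqref{eq:leading coeff of P}. Nonnegativity is forced by the positivity of $P(m)$ (\cref{lemma:polynomial}): any nonzero leading coefficient of a positive polynomial must itself be positive. This gives \ref{assert 2 of coeff lemma}, and \ref{assert 3 of coeff lemma} follows tautologically.

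For \ref{assert 4 of coeff lemma}, note that $(2r-2j)l$ is even and $\deg P(m)$ is even (\cref{lemma:polynomial}\ref{assert 3 of lemma:polynomial}), so the hypothesis $\deg P(m)<(2r-2j)l$ forces $\deg P(m)\le(2r-2j)l-2$; in particular the $m^{(2r-2j)l-1}$ coefficient must vanish. At leading order in the $\binom{m}{s}$'s, this coefficient receives contributions only from the $l$ symmetric multi-indices having one entry equal to $2r-2j-1$ and the rest equal to $2r-2j$, so their total contribution is a positive multiple of the left-hand side of \eqref{eq:leading coeff of P-1}, yielding its vanishing.

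To compute the $m^{(2r-2j)l-2}$ coefficient I would sort contributions by multi-index type: (a) those with $\sum j_a=(2r-2j)l-2$, having one entry $=2r-2j-2$ and the rest $=2r-2j$; (b) those with $\sum j_a=(2r-2j)l-2$, having two entries $=2r-2j-1$ and the rest $=2r-2j$ (only if $l\ge 2$); and (c) subleading contributions from multi-indices of weight $(2r-2j)l$ or $(2r-2j)l-1$ coming from lower-order terms of $\binom{m}{s}$. The type-(c) contributions are proportional to the quantities already shown to vanish (by \ref{assert 3 of coeff lemma} and the first half of \ref{assert 4 of coeff lemma}) and thus drop out. Combining (a) and (b), using the elementary identities $\frac{(2r-2j-2)!\,(2r-2j)!}{((2r-2j-1)!)^2}=\frac{2r-2j}{2r-2j-1}$ and $\binom{l}{2}/l=(l-1)/2$, recovers exactly the ratio $\frac{(l-1)(r-j)}{2r-2j-1}$ in \eqref{eq:leading coeff of P-2}; nonnegativity again follows from $P(m)>0$ for large $m$. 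The main obstacle is purely notational: in step (c) one must track lower-order terms of the $\binom{m}{s}$'s and verify they exactly cancel using the earlier vanishings; no new geometric input is required beyond positivity of $P(m)$ and parity of its degree.
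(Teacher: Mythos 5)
Your proposal is correct and follows the same approach as the paper: truncate the multi-index range using $\beta\cdot N^s H_X\wnum 0$ for $s\ge 2r-2j+1$, then read off the coefficients of $m^{(2r-2j)l}$, $m^{(2r-2j)l-1}$, and $m^{(2r-2j)l-2}$ from the expansion in \cref{eq:poly coeff}, using positivity and parity of $\deg P(m)$ from \cref{lemma:polynomial}. You supply more explicit bookkeeping than the paper's terse ``by considering the coefficient of \ldots'' sketch, and your computation of the ratio $\frac{\binom{l}{2}/((K!)^{l-2}((K-1)!)^2)}{l/((K!)^{l-1}(K-2)!)}=\frac{(l-1)(r-j)}{2r-2j-1}$ (with $K=2r-2j$) correctly recovers the constant in \eqref{eq:leading coeff of P-2}.
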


\begin{proof}
By the assumption that $\beta\cdot N^{s}H_X\wnum 0$ for all $s\geq 2r-2j+1$, the sum in \cref{eq:poly coeff} runs over ${0\leq j_1, \dots, j_l\leq 2r-2j}$.
Hence we get Assertions~\ref{assert 1 of coeff lemma}--\ref{assert 3 of coeff lemma}. Here \eqref{eq:leading coeff of P} is nonnegative as $P(m)$ is a positive polynomial by \cref{lemma:polynomial}~\ref{assert 1 of lemma:polynomial}. 

For Assertion~\ref{assert 4 of coeff lemma}, if $\deg P(m)<(2r-2j)l$, then $\deg P(m)\leq (2r-2j)l-2$ as $\deg P(m)$ is even by \cref{lemma:polynomial}~\ref{assert 3 of lemma:polynomial}.
Then by considering the coefficient of the term $m^{(2r-2j)l-1}$ in $P(m)$, we get \cref{eq:leading coeff of P-1}.
Similarly, by considering the coefficient of $m^{(2r-2j)l-2}$, we see that \eqref{eq:leading coeff of P-2} is nonnegative as $P(m)$ is a positive polynomial. 
\end{proof}

We shall prove \cref{thm:full-version-positivity} by induction on the index $j$. To make the proof more accessible, we begin with the following proposition which is exactly the base case $j=0$.

\begin{proposition}
\label{prop:zero-case}
Keep the notation in \cref{assum:XfN}. Then there exists a positive integer $t_r<d$ such that the following assertions hold:
\begin{enumerate}[label=\emph{(\arabic*)}, ref=(\arabic*)]

\item \label{prop:zero-case-1}
$(N^{2r}H_X)^{i} \in \NPbar^{i}(X)$ for any $0\leq i\leq t_{r}$, where we formally set $(N^{2r}H_X)^{0}=[X]$; 

\item \label{prop:zero-case-2}
$f^*((N^{2r}H_X)^{i})\wnum (N^{2r}H_X)^{i}$ for any $0\leq i\leq t_{r}$;

\item \label{prop:zero-case-3}
$(N^{2r}H_X)^{i}\wg 0$ for any $0\leq i\leq t_{r}$;

\item \label{prop:zero-case-4}
$(N^{2r}H_X)^{t_{r}}\cdot N^{s}H_X\wnum 0$ for all $s\geq 2r-1$; 

\item \label{prop:zero-case-5}

$
-(N^{2r}H_X)^{t_{r}-1}\cdot (N^{2r-1}H_X)^2 \wg 0; 
$

\item \label{prop:zero-case-6}
$(N^{2r}H_X)^{t_{r}}\cdot N^{2r-2}H_X\wg 0$.
\end{enumerate}
In particular, $M_1\coloneqq (N^{2r}H_X)^{t_r}\wg 0$ and $M_1\cdot N^{2r-2}H_X\wg 0$.
\end{proposition}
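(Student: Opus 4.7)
The plan is to take $t_r$ to be the largest positive integer such that $(N^{2r}H_X)^{t_r}\not\wnum 0$. This presupposes $N^{2r}H_X\not\wnum 0$, which may be arranged by replacing $H_X$ with a small generic perturbation since the set of classes $D\in\N^1(X)_\bR$ with $N^{2r}D\neq 0$ is open and dense. The upper bound $t_r\le d-1$ follows from the fact that $((f^m)^*H_X)^d=H_X^d$ is constant in $m$ (as $f^m$ is an automorphism); applying \cref{lemma:polynomial2} with $\beta=[X]$ and $l=d$ forces the leading coefficient $(N^{2r}H_X)^d$ to vanish. Properties~\ref{prop:zero-case-1} and~\ref{prop:zero-case-2} are then immediate: $N^{2r}H_X=\lim_{m\to\infty}\binom{m}{2r}^{-1}(f^m)^*H_X$ is a limit of ample classes hence nef, so $(N^{2r}H_X)^i\in\NPbar^i(X)$; and $f^*(N^{2r}H_X)=(\id+N)(N^{2r}H_X)=N^{2r}H_X$ since $N^{2r+1}=0$. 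For \ref{prop:zero-case-3}, if $(N^{2r}H_X)^i\wnum 0$ for some $i\le t_r$, then intersecting against $(N^{2r}H_X)^{t_r-i}\cdot H_1\cdots H_{d-t_r}\in\NPbar^{d-i}(X)$ would give $(N^{2r}H_X)^{t_r}\cdot H_1\cdots=0$ for arbitrary ample $H_j$, contradicting the maximality of $t_r$; combined with $(N^{2r}H_X)^i\wge 0$, \cref{lemma:alpha>0} gives $\wg 0$.

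The polynomial
\[
P(m)=((f^m)^*H_X)^{t_r+1}\cdot H_1\cdots H_{d-t_r-1},
\]
for arbitrary ample $H_1,\dots,H_{d-t_r-1}$, is central to \ref{prop:zero-case-4}, \ref{prop:zero-case-5}, and \ref{prop:zero-case-6}. It is positive for every $m\in\bZ$. By \cref{lemma:polynomial2} (with $\beta=H_1\cdots H_{d-t_r-1}$, $l=t_r+1$, $j=0$), the coefficient of $m^{2r(t_r+1)}$ is a positive scalar multiple of $(N^{2r}H_X)^{t_r+1}\cdot H_1\cdots=0$ by the choice of $t_r$, so part~\ref{assert 4 of coeff lemma} of that lemma forces the intersection $(N^{2r}H_X)^{t_r}\cdot N^{2r-1}H_X\cdot H_1\cdots$ to vanish for every ample choice, yielding \ref{prop:zero-case-4} in the remaining nontrivial case $s=2r-1$. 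Then \ref{prop:zero-case-5} in the weak form $-(N^{2r}H_X)^{t_r-1}\cdot(N^{2r-1}H_X)^2\wge 0$ follows from \cref{lemma:HR bilinear}~\ref{lemma:HR bilinear-1} with $\alpha=(N^{2r}H_X)^{t_r-1}$, $D=N^{2r}H_X$, $D'=N^{2r-1}H_X$.

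The main obstacle I anticipate is upgrading \ref{prop:zero-case-5} from $\wge 0$ to strict $\wg 0$, and I plan to handle it by contradiction. If $(N^{2r}H_X)^{t_r-1}\cdot(N^{2r-1}H_X)^2\wnum 0$, then \cref{lemma:HR bilinear}~\ref{lemma:HR bilinear-3} produces a \emph{unique} $c\in\bR$ with $(N^{2r}H_X)^{t_r-1}\cdot(N^{2r-1}H_X+cN^{2r}H_X)\wnum 0$. Pulling back by $f^*$, using $f^*N^{2r}H_X=N^{2r}H_X$, $f^*N^{2r-1}H_X=N^{2r-1}H_X+N^{2r}H_X$, and the fact that $f^*$ preserves weak numerical equivalence (since $f$ is an automorphism), the same relation must hold with $c+1$ replacing $c$, contradicting the uniqueness. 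Finally, \ref{prop:zero-case-6} falls out of the coefficient of $m^{2r(t_r+1)-2}$ in $P(m)$: by \cref{lemma:polynomial2}~\ref{assert 4 of coeff lemma}, this is a positive scalar multiple of
\[
(N^{2r}H_X)^{t_r}\cdot N^{2r-2}H_X\cdot H_1\cdots+\tfrac{t_r r}{2r-1}(N^{2r}H_X)^{t_r-1}\cdot(N^{2r-1}H_X)^2\cdot H_1\cdots,
\]
which is nonnegative since $P(m)$ is positive. By the strict form of \ref{prop:zero-case-5}, the second summand is $\le 0$ and strictly negative for a suitable ample choice, forcing the first summand to be $\ge 0$ and strictly positive for the same choice; \cref{lemma:alpha>0} then yields \ref{prop:zero-case-6}.
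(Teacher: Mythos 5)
Your overall strategy mirrors the paper's very closely: define $t_r$ by maximality, show $t_r<d$ via the constant polynomial $((f^m)^*H_X)^d=H_X^d$, get nefness of $N^{2r}H_X$ as a limit of normalized $(f^m)^*H_X$, deduce $\wnum 0$ for the $(2r-1)$-intersection from the degree drop of $P(m)$, pass to the Hodge-index argument for weak negativity of the $(N^{2r-1}H_X)^2$ term, upgrade to strict negativity via the $f^*$-pullback and uniqueness of $c$, and extract positivity of the $N^{2r-2}H_X$ term from the next-to-leading coefficient. All of this is sound and matches the paper's own proof essentially line by line (the paper applies the cited HR-bilinear lemma in part (2) rather than (1), but (2) is (1) quantified over all ample tuples).

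The one genuine gap is the very first move. You propose to "arrange" $N^{2r}H_X\not\wnum 0$ by replacing $H_X$ with a small generic perturbation. This does not prove the stated proposition: Assumption \ref{assum:XfN} fixes $H_X$, and every assertion \ref{prop:zero-case-1}--\ref{prop:zero-case-6} refers to that $H_X$; proving them for a nearby $H_X'$ is a different statement, and there is no limiting argument supplied (and none is immediate, since the integer $t_r$ could jump under perturbation). Moreover the perturbation is unnecessary: it is a standard fact that $N^{2r}H_X\neq 0$ for \emph{every} ample $H_X$ when $N^{2r}\neq 0$ (the paper cites \cite[Lemma~4.4]{Keeler00}). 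The quick reason, using exactly the nefness you establish in part \ref{prop:zero-case-1}: pick $D$ with $N^{2r}D\neq 0$ and $\epsilon>0$ small so that $H_X\pm\epsilon D$ is ample; then $N^{2r}(H_X\pm\epsilon D)$ is nef as a limit of $\binom{m}{2r}^{-1}(f^m)^*(H_X\pm\epsilon D)$, so if $N^{2r}H_X=0$ then both $\pm\epsilon N^{2r}D$ would be nef, which is impossible because the nef cone is salient. Replace the perturbation step by this (or by the citation) and your proof is complete; and recall that $\not\wnum 0$ for a divisor class is the same as $\neq 0$ by \cref{lemma:K1-trivial}, so there is no ambiguity between the two notions at the level of $\N^1$.

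A couple of cosmetic points you may wish to tidy: in \ref{prop:zero-case-3} you should record explicitly that $(N^{2r}H_X)^i\wge 0$ (it follows from membership in $\NPbar^i(X)$) before invoking \cref{lemma:alpha>0}; and in \ref{prop:zero-case-6} the second summand is in fact strictly negative for \emph{every} ample tuple by the strict form of \ref{prop:zero-case-5}, so you get $\wg 0$ directly without a final appeal to \cref{lemma:alpha>0}.
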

\begin{proof}
Note first that $N^{2r}H_X\not \equiv 0$ by \cite[Lemma~4.4]{Keeler00}.
So we define $t_r$ by 
\begin{equation}
\label{eq:def-tr}
t_r \coloneqq \max\{t\in \bZ_{>0} : (N^{2r}H_X)^{t}\not \wnum 0\}.
\end{equation}
In other words,
$(N^{2r}H_X)^{t_r}\not \wnum 0$, but $(N^{2r}H_X)^{t_r+1} \wnum 0$.
Clearly, $t_r\leq d$.

We claim that $t_r<d$.
Otherwise, $(N^{2r}H_X)^{d}\neq 0$
and it is the coefficient of the term $m^{2rd}$ in the polynomial $((f^m)^*H_X)^d$ using \cref{eq:poly coeff}. On the other hand, $((f^m)^*H_X)^d=H_X^d$ is constant by the projection formula, which is a contradiction.
In the following, we just check all assertions.

\smallskip

(1) Recall that $(f^m)^*H_X\num \sum_{i=0}^{2r}\binom{m}{i}N^iH_X$.
It follows that
\[
N^{2r}H_X\num \lim_{m\to \infty}\left(\frac{(2r)!}{m^{2r}}(f^m)^*H_X\right) \in \Nef(X)
\]
is a nef divisor class.
So $(N^{2r}H_X)^{i}$ is in $\NP^{i}(X)$ and hence in $\NPbar^{i}(X)$ for any $0\leq i\leq t_{r}$.

(2) Since $f^*(N^{2r}H_X)\equiv N^{2r}H_X+ N^{2r+1}H_X\equiv N^{2r}H_X$, we get that $f^*((N^{2r}H_X)^{i})\wnum (N^{2r}H_X)^{i}$ for any $0\leq i\leq t_{r}$.

\smallskip

(3) By the definition of $t_r$, i.e., \cref{eq:def-tr}, we have $(N^{2r}H_X)^{i}\not\wnum 0$ for any $0\leq i\leq t_{r}$.
Assertion~\ref{prop:zero-case-3} thus follows from Assertion~\ref{prop:zero-case-1} and \cref{lemma:alpha>0}. 

\smallskip

(4) First, by \cref{assum:XfN} and the construction of $t_r$, $(N^{2r}H_X)^{t_r}\cdot N^{s}H_X\wnum 0$ for all $s\ge 2r$.
It suffices to show that $(N^{2r}H_X)^{t_r}\cdot N^{2r-1}H_X\wnum 0$.
Let $H_1, \dots, H_{d-t_r-1}$ be arbitrary ample divisors on $X$.
Consider the following polynomial 
\begin{align*}
P(m) \coloneqq ((f^m)^*H_X)^{t_r+1}\cdot H_1\cdots H_{d-t_r-1}.
\end{align*}
Namely, we will apply \cref{lemma:polynomial,lemma:polynomial2} for $\beta=[X]$, $i=j=0$, and $l=t_{r}+1$.
Again, by the definition of $t_r$, $(N^{2r}H_X)^{t_r+1} \wnum 0$.
So by \cref{lemma:polynomial2} \ref{assert 1 of coeff lemma}--\ref{assert 3 of coeff lemma}, $\deg P(m)<2r(t_r+1)$.
Then according to \cref{eq:leading coeff of P-1} in \cref{lemma:polynomial2}~\ref{assert 4 of coeff lemma}, we get that
\begin{align*}
(N^{2r}H_X)^{t_r}\cdot N^{2r-1}H_X\cdot H_1\cdots H_{d-t_r-1} = 0.
\end{align*}
Since $H_1, \dots, H_{d-t_r-1}$ are arbitrary, $(N^{2r}H_X)^{t_r}\cdot N^{2r-1}H_X\wnum 0$.
So Assertion~\ref{prop:zero-case-4} is proved.

\smallskip

(5) Applying \cref{lemma:HR bilinear}~\ref{lemma:HR bilinear-2} to $\alpha=(N^{2r}H_X)^{t_r-1}$, $D=N^{2r}H_X$, and $D'=N^{2r-1}H_X$ by Assertions \ref{prop:zero-case-1} and \ref{prop:zero-case-4}, we obtain that
\[
-(N^{2r}H_X)^{t_r-1}\cdot (N^{2r-1}H_X)^2\wge 0.\]
By \cref{lemma:alpha>0}, it suffices to show that $
(N^{2r}H_X)^{t_r-1}\cdot (N^{2r-1}H_X)^2\not \wnum 0.$
Suppose to the contrary that $
(N^{2r}H_X)^{t_r-1}\cdot (N^{2r-1}H_X)^2 \wnum 0$.
Then by \cref{lemma:HR bilinear}~\ref{lemma:HR bilinear-3}, there exists a constant $c\in\bR$ such that 
\[
(N^{2r}H_X)^{t_r-1}\cdot (N^{2r-1}H_X+cN^{2r}H_X) \wnum 0.
\]
Applying $f^*$ to the above equality and noting that $f^*((N^{2r}H_X)^{t_r-1})\wnum (N^{2r}H_X)^{t_r-1}$ by Assertion~\ref{prop:zero-case-2}, we have that
\[
(N^{2r}H_X)^{t_r-1}\cdot (N^{2r-1}H_X+(1+c)N^{2r}H_X) \wnum 0.
\]
But this implies that $(N^{2r}H_X)^{t_r} \wnum 0$, a contradiction.
So Assertion~\ref{prop:zero-case-5} is proved.

\smallskip

(6) We go back to the proof of Assertion~\ref{prop:zero-case-4}.
Recall that we have shown there that $\deg P(m)<2r(t_r+1)$.
Hence by \cref{lemma:polynomial2}~\ref{assert 4 of coeff lemma}, the coefficient of the term $m^{2r(t_r+1)-2}$ in $P(m)$, which is a positive multiple of
\begin{align*}
{} & (N^{2r}H_X)^{t_r}\cdot N^{2r-2}H_X\cdot H_1\cdots H_{d-t_r-1} \\
+ \ & \frac{rt_r}{2r-1} \, (N^{2r}H_X)^{t_r-1}\cdot (N^{2r-1}H_X)^2\cdot H_1\cdots H_{d-t_r-1},
\end{align*}
is nonnegative.
Note that by Assertion~\ref{prop:zero-case-5}, one has
\[
(N^{2r}H_X)^{t_r-1}\cdot (N^{2r-1}H_X)^2\cdot H_1\cdots H_{d-t_r-1} <0.
\]
It follows that $(N^{2r}H_X)^{t_r}\cdot N^{2r-2}H_X\cdot H_1\cdots H_{d-t_r-1}>0$.
Since $H_1, \dots, H_{d-t_r-1}$ are arbitrary, we get that $(N^{2r}H_X)^{t_r}\cdot N^{2r-2}H_X\wg 0$, and hence complete the proof of \cref{prop:zero-case}.
\end{proof}

\begin{proof}[Proof of \cref{thm:full-version-positivity}]
We prove the theorem by induction on $j$.
Namely, we will inductively construct the sequence of positive integers $t_r, \dots, t_{r-j}, \dots, t_1$ with $M_{j+1}\coloneqq M_j\cdot (N^{2r-2j}H_X)^{t_{r-j}}$ satisfying all assertions. 
The case when $j=0$ has been proved in \cref{prop:zero-case}.

Suppose that we have constructed $t_r, \dots, t_{r-(j-1)}$ for some $0\leq j-1\leq r-2$.
Denote $s_j\coloneqq t_r+t_{r-1}+\cdots+t_{r-(j-1)}$.
We will construct $t_{r-j}$ satisfying all assertions.
So from the inductive hypotheses for $j-1$, we particularly have the following:
\begin{enumerate}[label=(IH\arabic*), ref=IH\arabic*] 
\item \label{IH NPbar}
$M_j\in \NPbar^{s_{j}}(X)$;

\item \label{IH f-invariant}
$f^*(M_j)\wnum M_j$;

\item \label{IH NNN ample} 
$M_j\wg 0$; 

\item \label{IH N2r-2j-1 =0}
$M_j\cdot N^{s}H_X\wnum 0$ for all $s\geq 2r-2j+1$;

\setcounter{enumi}{5}
\item \label{IH N2r-2j-2 >0} 
$M_j\cdot N^{2r-2j}H_X\wg 0$.
\end{enumerate}

By \eqref{IH N2r-2j-2 >0}, 
we define $t_{r-j}$ as follows:
\begin{equation}
\label{eq:def-tr-j}
t_{r-j} \coloneqq \max\{t\in \mathbf{Z}_{>0} : M_j\cdot (N^{2r-2j}H_X)^{t}\not \wnum 0\}.
\end{equation}
In particular, $s_{j+1}=s_j+t_{r-j}\le d$.

\smallskip

(1) We have that for any $0\leq i\leq t_{r-j}$,
\begin{equation}
\label{eq:Mj limit}
M_j\cdot (N^{2r-2j}H_X)^{i}
\wnum \lim_{m\to \infty} \left( M_j\cdot \frac{((2r-2j)!)^i}{ m^{i(2r-2j)}}((f^m)^*H_X)^i\right) \in \NPbar^{s_{j}+i}(X).
\end{equation}
Here the equality follows from \eqref{IH N2r-2j-1 =0} and the inclusion follows from \eqref{IH NPbar}.

 We then claim that $s_{j+1}=s_j+t_{r-j}<d$ as required in \cref{thm:full-version-positivity}. In fact, if $s_{j+1}=d$, then by the projection formula and \eqref{IH f-invariant}, 
\[M_j\cdot ((f^m)^*H_X)^{t_{r-j}}=(f^{-m})^{*}M_j\cdot H_X^{t_{r-j}}=M_j\cdot H_X^{t_{r-j}}\] is a constant.
Hence $M_j\cdot (N^{2r-2j}H_X)^{t_{r-j}}=0$ because the limit on the right hand side of \cref{eq:Mj limit} is $0$ when $i=t_{r-j}$.
This contradicts to the definition of $t_{r-j}$, i.e., \cref{eq:def-tr-j}.

\smallskip

(2) Assertion \ref{condition f-invariant} follows directly by
\begin{align*}
 {}& f^*(M_j\cdot (N^{2r-2j}H_X)^{i})\wnum M_j\cdot f^*((N^{2r-2j}H_X)^{i})\\
 \wnum {}& M_j\cdot (N^{2r-2j}H_X+N^{2r-2j+1}H_X)^{i} \wnum M_j \cdot (N^{2r-2j}H_X)^{i},
\end{align*} for any $0\leq i\leq t_{r-j}$.
Here the first equality follows from \eqref{IH f-invariant} and the last equality follows from \eqref{IH N2r-2j-1 =0}.

\smallskip

(3) By the definition of $t_{r-j}$, i.e., \cref{eq:def-tr-j}, we have $M_j\cdot (N^{2r-2j}H_X)^{i}\not\wnum 0$ for any $0\leq i\leq t_{r-j}$.
Assertion \ref{condition NNN ample} thus follows from Assertion \ref{condition NPbar} and \cref{lemma:alpha>0}.

\smallskip
 
(4)
To prove Assertion \ref{condition N2r-2j-1 =0}, by \eqref{IH N2r-2j-1 =0} and the construction of $t_{r-j}$, it suffices to show that 
\begin{align}
\label{eq:proof j>0 2r-1=0}
 M_j\cdot (N^{2r-2j}H_X)^{t_{r-j}}\cdot N^{2r-2j-1}H_X\wnum 0.
\end{align}
Let $H_1, \dots, H_{d-s_{j+1}-1}$ be arbitrary ample divisors on $X$.
We then consider the following polynomial 
\[
Q(m) \coloneqq M_j\cdot ((f^m)^*H_X)^{t_{r-j}+1}\cdot H_1\cdots H_{d-s_{j+1}-1}.
\]
By \eqref{IH NNN ample} and \eqref{IH N2r-2j-1 =0},
we can apply \cref{lemma:polynomial,lemma:polynomial2} for $\beta= M_j$ and $l=t_{r-j}+1$.
As $M_j\cdot (N^{2r-2j}H_X)^{t_{r-j}+1} \wnum 0$, by \cref{lemma:polynomial2} \ref{assert 1 of coeff lemma}--\ref{assert 3 of coeff lemma}, 
\[\deg Q(m)<(2r-2j)(t_{r-j}+1).\]
Then according to \cref{eq:leading coeff of P-1} in \cref{lemma:polynomial2}~\ref{assert 4 of coeff lemma}, we get
\begin{align*}
 M_j\cdot (N^{2r-2j}H_X)^{t_{r-j}}\cdot N^{2r-2j-1}H_X \cdot H_1\cdots H_{d-s_{j+1}-1} = 0. 
\end{align*}
Since $H_1, \dots, H_{d-s_{j+1}-1}$ are arbitrary, \cref{eq:proof j>0 2r-1=0} and hence Assertion \ref{condition N2r-2j-1 =0} follow.

\smallskip

(5) By Assertions \ref{condition NPbar} and \ref{condition N2r-2j-1 =0}, we can apply \cref{lemma:HR bilinear}~\ref{lemma:HR bilinear-2} to $\alpha=M_j\cdot (N^{2r-2j}H_X)^{t_{r-j}-1}$, $D=N^{2r-2j}H_X$, and $D'=N^{2r-2j-1}H_X$.
We thus obtain that
\[
-M_j\cdot (N^{2r-2j}H_X)^{t_{r-j}-1}\cdot (N^{2r-2j-1}H_X)^2 \wge 0.
\]
By \cref{lemma:alpha>0}, it suffices to show that $M_j\cdot (N^{2r-2j}H_X)^{t_{r-j}-1}\cdot (N^{2r-2j-1}H_X)^2\not\wnum 0$.
Suppose that this is not the case.
Then by \cref{lemma:HR bilinear}~\ref{lemma:HR bilinear-3}, there exists a constant $c\in\bR$ such that
\[
M_j\cdot (N^{2r-2j}H_X)^{t_{r-j}-1}\cdot (N^{2r-2j-1}H_X+cN^{2r-2j}H_X) \wnum 0.
\] 
But applying $f^*$ to this equality, we have
\begin{align*}
0{}& \wnum f^*(M_j\cdot (N^{2r-2j}H_X)^{t_{r-j}-1}\cdot (N^{2r-2j-1}H_X+cN^{2r-2j}H_X))\\
{}&\wnum M_j\cdot (N^{2r-2j}H_X)^{t_{r-j}-1}\cdot f^*(N^{2r-2j-1}H_X+cN^{2r-2j}H_X)\\
{}&\wnum 
M_j\cdot (N^{2r-2j}H_X)^{t_{r-j}-1}\cdot (N^{2r-2j-1}H_X+(c+1)N^{2r-2j}H_X).
\end{align*}
Here the second equality follows from Assertion \ref{condition f-invariant} and the last equality follows from \eqref{IH N2r-2j-1 =0}.
By taking subtraction of the above two equations, we get
$
M_j\cdot (N^{2r-2j}H_X)^{t_{r-j}}\wnum 0,$
which contradicts to the definition of $t_{r-j}$, i.e., \cref{eq:def-tr-j}.

\smallskip

(6) We go back to the proof of Assertion \ref{condition N2r-2j-1 =0}.
Recall that $\deg Q(m)<(2r-2j)(t_{r-j}+1)$.
So by \cref{lemma:polynomial2} \ref{assert 4 of coeff lemma}, the coefficient of the term $m^{(2r-2j)(t_{r-j}+1)-2}$ in $Q(m)$, which is a positive multiple of
\begin{align*}
{} & M_j\cdot (N^{2r-2j}H_X)^{t_{r-j}}\cdot N^{2r-2j-2}H_X\cdot H_1\cdots H_{d-s_{j+1}-1} \\
+{} \ & \frac{t_{r-j}(r-j)}{2r-2j-1}\ M_j\cdot (N^{2r-2j}H_X)^{t_{r-j}-1}\cdot (N^{2r-2j-1}H_X)^2\cdot H_1\cdots H_{d-s_{j+1}-1},
\end{align*}
is nonnegative.
Note that by Assertion \ref{condition N2r-2j-1^2 <0}, we have
\[
M_j\cdot (N^{2r-2j}H_X)^{t_{r-j}-1}\cdot (N^{2r-2j-1}H_X)^2\cdot H_1\cdots H_{d-s_{j+1}-1}< 0.
\]
It follows that
\[
M_j\cdot (N^{2r-2j}H_X)^{t_{r-j}}\cdot N^{2r-2j-2}H_X\cdot H_1\cdots H_{d-s_{j+1}-1}>0.
\]
Hence $M_j\cdot (N^{2r-2j}H_X)^{t_{r-j}}\cdot N^{2r-2j-2}H_X\wg 0$ as $H_1,\ldots, H_{d-s_{j+1}-1}$ are arbitrary.

\smallskip

We thus complete the proof of \cref{thm:full-version-positivity} by induction.
\end{proof}

\subsection{Proofs of Theorem \ref{thm:positivity} and Corollary \ref{cor:degree-growth-upper-bound}}

\begin{proof}[Proof of \cref{thm:positivity}]
By \cref{thm:full-version-positivity} \ref{condition NNN ample}, there are positive integers $t_r,\ldots,t_1$ with $\sum_{i=1}^{r} t_i <d$ (in particular, $r<d$), such that for any $0\le j\le r-1$,
\[
M_{j+1}=(N^{2r}H_X)^{t_r}\cdot (N^{2r-2}H_X)^{t_{r-1}}\cdots (N^{2r-2j}H_X)^{t_{r-j}} \wg 0.
\]
Then \cref{thm:positivity} follows by \cref{def:weakly-positive}.
\end{proof}

Though the following lemma should be well known, we include it here for the sake of completeness.

\begin{lemma}
\label{lemma:deg-growth-invariant-under-iteration}
Let $X$ be a normal projective variety of dimension $d$ and $H_X$ an ample divisor on $X$, over $\bk$.
Let $f$ be an automorphism of zero entropy of $X$.
Then for any fixed $m\in \bZ_{>0}$, $\deg_i(f^n) = O(\deg_i(f^{mn}))$, as $n\to \infty$.
\end{lemma}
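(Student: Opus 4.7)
The plan is to reduce to the unipotent case and then exploit the fact that, after this reduction, the degree sequence becomes a genuine polynomial whose leading-order behavior is preserved by the substitution $q\mapsto mq+O(1)$.

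Since $f$ has zero entropy, $f^*|_{\N^1(X)_\bR}$ is quasi-unipotent, so there exists $l\ge 1$ with $(f^l)^*|_{\N^1(X)_\bR}=\id+N_l$ unipotent (and $N_l$ nilpotent). For each residue $s\in\{0,1,\ldots,l-1\}$ the class $(f^s)^*H_X$ is ample, so by the openness of the ample cone together with the finiteness of the set of residues, there exists a constant $C>0$ such that $C^{-1}H_X\le (f^s)^*H_X\le CH_X$ in $\N^1(X)_\bR$, where $\le$ means the difference is nef. Applying the morphism pullback $(f^{lq})^*$ (which preserves nef-ness) and then intersecting $i$-fold with the nef class $H_X^{d-i}$ (using the elementary monotonicity that $A\le B$ nef implies $A^i\cdot H^{d-i}\le B^i\cdot H^{d-i}$ for nef $H$) yields
\[
C^{-i}\deg_i(f^{lq})\le\deg_i(f^{lq+s})\le C^i\deg_i(f^{lq})
\]
for every $q\ge 0$ and $0\le s<l$.

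In the unipotent setting, $(f^{lq})^*H_X\num\sum_{j\ge 0}\binom{q}{j}N_l^jH_X$, so $\deg_i(f^{lq})=((f^{lq})^*H_X)^i\cdot H_X^{d-i}$ is a polynomial $P(q)$ in $q$; since $\deg_i(f^{lq})>0$ for all $q\ge 0$ as an intersection of ample classes with a nef class, $P$ has positive leading coefficient. Now I would write $n=lq+s$ and $mn=l(mq+m_s)+s'_s$, where $m_s=\lfloor ms/l\rfloor$ and $s'_s\equiv ms\pmod l$. The same comparison applied to $mn$ gives $\deg_i(f^{mn})\asymp P(mq+m_s)$, so as $q\to\infty$ with $s$ fixed,
\[
\frac{\deg_i(f^n)}{\deg_i(f^{mn})}\asymp\frac{P(q)}{P(mq+m_s)}\longrightarrow m^{-\deg P}
\]
(which equals $1$ if $\deg P=0$). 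Taking the maximum over the $l$ residue classes concludes $\deg_i(f^n)=O(\deg_i(f^{mn}))$.

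The only delicate point is the comparison step: one must verify that the bounds $C^{-1}H_X\le (f^s)^*H_X\le CH_X$ can be pulled back by $(f^{lq})^*$ and then intersected with the nef power $H_X^{d-i}$ while preserving the inequality on intersection numbers. Both ingredients---pullback of a nef class by a morphism is nef, and products of nef classes are monotone---are standard, but they need to be applied carefully. Everything else is bookkeeping on residue classes modulo $l$.
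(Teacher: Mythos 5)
Your proof is correct, but it follows a genuinely different route than the paper's. The paper argues at the level of the operators $f^*|_{\N^i(X)_\bR}$: it cites the standard fact that $\deg_i(f^n)$ and the operator norm $\|(f^n)^*|_{\N^i(X)_\bR}\|$ have the same growth rate, observes that $f^*|_{\N^i(X)_\bR}$ is quasi-unipotent for every $i$ (via the log-concavity of dynamical degrees, \cite[Lemma~5.7]{Truong20}), and then simply notes that a quasi-unipotent matrix and its $m$-th power have the same maximal Jordan block size, so the polynomial growth exponents of $\|(f^n)^*|_{\N^i(X)_\bR}\|$ and $\|(f^{mn})^*|_{\N^i(X)_\bR}\|$ agree. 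Your argument instead stays entirely inside intersection theory on divisors: you pass to a power $f^l$ that is unipotent, use openness of the ample cone to sandwich $(f^s)^*H_X$ between multiples of $H_X$ (uniformly over the finitely many residues $s$), pull back by $(f^{lq})^*$ and use monotonicity of intersection products of nef classes to compare $\deg_i(f^{lq+s})$ with $\deg_i(f^{lq})$, and finally exploit that $\deg_i(f^{lq})$ is a positive polynomial $P(q)$ to conclude. The paper's proof is shorter because it invokes two known results wholesale; yours is more self-contained, avoids any appeal to $\N^i(X)_\bR$ for $i\ge 2$ or to norm--degree comparability, and is arguably more elementary. One small stylistic remark: the bound you need is just $\deg_i(f^n)=O(\deg_i(f^{mn}))$, so you only need boundedness of the ratio $P(q)/P(mq+m_s)$, not the precise limit $m^{-\deg P}$; but computing the limit is a clean way to see the boundedness and also yields the two-sided estimate $\deg_i(f^n)\asymp\deg_i(f^{mn})$, which is in fact what the paper's argument gives as well.
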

\begin{proof}
It is well known that $\deg_i(f^n)$ and $\|(f^n)^*|_{\N^i(X)_\bR}\|$ have the same growth rate (see, e.g., \cite{Truong20,Dang20}).
By definition, $f^*|_{\N^1(X)_\bR}$ is quasi-unipotent, so are all $f^*|_{\N^i(X)_\bR}$ by the log-concavity property of dynamical degrees (see \cite[Lemma~5.7]{Truong20}).
We just need to note that the maximum size of Jordan blocks of $f^*|_{\N^i(X)_\bR}$ and $(f^m)^*|_{\N^i(X)_\bR}$ coincide.
\end{proof}

\begin{proof}[Proof of \cref{cor:degree-growth-upper-bound}]
By \cref{lemma:deg-growth-invariant-under-iteration},
replacing $f$ by certain iterate, we may assume that $f^*|_{\N^1(X)_\bR}=\id+N$ is unipotent with $N^{2r}\neq 0$ and $N^{2r+1} = 0$.
In other words, the maximum size of Jordan blocks of $f^*|_{\N^1(X)_\bR}$ is $2r+1$.
It is trivial if $r=0$.
So we assume further that $r\in\bZ_{>0}$ so that \cref{assum:XfN} is satisfied.
By \cref{thm:positivity}, one has $r\le d-1$.

By \cref{prop:zero-case} \ref{prop:zero-case-3}, $N^{2r}H_X \cdot H_X^{d-1}>0$.
Hence 
\begin{align*}
\deg_1(f^n) = (f^n)^*H_X \cdot H_X^{d-1} = \sum_{i=0}^{2r} \binom{n}{i} N^i H_X \cdot H_X^{d-1} \sim C n^{2r},
\end{align*}
for some positive constant $C$, which proves
\cref{cor:degree-growth-upper-bound}.
\end{proof}

\begin{remark}
Similarly, it is not hard to see that
\[
\deg_i(f^n) = (f^n)^*H_X^i \cdot H_X^{d-i} = \Bigg(\sum_{j=0}^{2r} \binom{n}{j} N^j H_X\Bigg)^i \cdot H_X^{d-i} = O(n^{2ir}).
\]
On the other hand, since $\deg_i(f^n) = \deg_{d-i}(f^{-n})$, we also have $\deg_i(f^n) = O(n^{2(d-i)r})$.
Combining them together yields that $\deg_i(f^n) = O(n^{2(d-1)\min (i, d-i)})$, which does not seem to be optimal when $2\le i\le d-2$ (cf. \cite[Remark~4.1]{DLOZ22}).
\end{remark}

\section{Rank of matrices associated with restricted partitions}
\label{sec rank}

In this section we discuss restricted partitions, which are classic objects in combinatorics and play important roles in the proof of \cref{thm:plov}. 

\subsection{Restricted partitions}
\label{subsec partition}

Let $k,d\in \bZ_{>0}$ be fixed positive integers.
For any $n\in \bN$, we call $\lambda = (\lambda_1,\ldots,\lambda_d)$ a {\it restricted partition} of $n$ into at most $d$ parts with each part at most $k$, if $\sum_{j=1}^d \lambda_j = n$ with $\lambda_j\in \bN$ and $k\ge \lambda_1\ge \lambda_2\ge \cdots \lambda_d \ge 0$.
Denote by $P(k, d, n)$ the set of these restricted partitions $\lambda$ and $p(k,d,n)$ the cardinality of $P(k,d,n)$.
Clearly, $p(k,d,n)=0$ if $n>dk$.

The numbers $p(k,d,n)$ of these restricted partitions satisfy the following equation:
\[
\sum_{n=0}^{dk} p(k,d,n) q^n = \binom{d+k}{d}_q = \binom{d+k}{k}_q,
\]
where $( \ )_q$ denotes the $q$-binomial coefficient or Gaussian binomial coefficient.
This, in particular, implies the following properties (see \cite[Chapter~3]{Andrews98} for details):
\begin{itemize}
\item Symmetry in $k$ and $d$: $p(k,d,n) = p(d,k,n)$;
\item Symmetry in $n$: $p(k,d,n) = p(k,d,dk-n)$;
\item Recurrence relation: $p(k,d,n) = p(k,d-1,n) + p(k-1,d,n-d)$.
\end{itemize}
Another highly nontrivial property in combinatorics is the unimodality.
\begin{itemize}
\item Unimodality: 
\begin{align}\label{eq unimod}
\begin{cases}
p(k,d,n) \le p(k,d,n+1), &\text{if } 0 \le n < dk/2;\\
p(k,d,n) \ge p(k,d,n+1), &\text{if } dk/2 \le n < dk.
\end{cases}
\end{align}
\end{itemize}

It is also useful to keep track of the number of times that a particular integer occurs as a part.
Therefore, we often alternatively write a restricted partition $\lambda=(\lambda_1,\ldots,\lambda_d)$ of $n$ as
\[
\lambda = (k^{e_k},\ldots,1^{e_1},0^{e_0}),
\]
where exactly $e_i$ of the parts $\lambda_j$ are equal to $i$.
Note that 
\[
e_i\in \bN, \ \sum_{i=0}^k e_i = d, \text{ and } \sum_{i=0}^k ie_i = n.
\]

\subsection{Weighted incidence matrices associated with restricted partitions}
\label{subsec matrix}

Let $k,d$ be fixed positive integers.
Let $\mu= (k^{e_k},\ldots,1^{e_1},0^{e_0})\in P(k, d, n-1)$ be a restricted partition of $n-1\in \bN$.
For $ 0\le i\le k-1$, denote
\[
\mu(i) \coloneqq (k^{e_k},\ldots,(i+1)^{e_{i+1}+1},i^{e_i-1},\ldots,0^{e_0}).
\]
Namely, $\mu(i)$ is obtained by replacing one part $i$ in $\mu$ by $i+1$.
Then $\mu(i)\in P(k, d, n)$ is a restricted partition of $n$ as long as $e_i>0$.

To encode the relation between $P(k,d,n-1)$ and $P(k,d,n)$, we define a {\it weighted incidence matrix}
\[
A_{k, d, n} \in \Mat_{p(k,d,n-1) \times p(k,d,n)}(\bN)
\]
as follows.
First write $A_{k, d, n} = (a_{\mu, \lambda})_{\mu, \lambda}$ with $\mu\in P(k,d,n-1)$ and $\lambda\in P(k,d,n)$.
Then we define
\[
a_{\mu, \lambda} \coloneqq
\begin{cases}
e_i, &\textrm{if } \mu= (k^{e_k},\ldots,1^{e_1},0^{e_0}) \textrm{ and } \lambda=\mu(i), \\
0, &\text{otherwise}.
\end{cases}
\]
Namely, $a_{\mu, \lambda}$ is the number of ways to obtain $\lambda$ from $\mu$ by increasing exactly one part by $1$ (and reordering if necessary). 
Below we give two examples of such matrices with $k=4$, $d=3$, and $n\in\{6,7\}$.
\[
A_{4,3,6}=
\begin{pmatrix}
1 & 1 & 0 & 0 & 0 \\
1 & 0 & 1 & 1 & 0 \\
0 & 1 & 0 & 2 & 0 \\
0 & 0 & 0 & 2 & 1
\end{pmatrix},
\quad
A_{4,3,7}=
\begin{pmatrix}
1 & 1 & 0 & 0 \\
0 & 2 & 0 & 0 \\
2 & 0 & 1 & 0 \\
0 & 1 & 1 & 1 \\
0 & 0 & 0 & 3 \\
\end{pmatrix}.
\]
Here we adopt the canonical (a.k.a. graded reverse lexicographic) ordering of restricted partitions;
for instance, $P(4,3,6)=\{(4,2,0), (4,1,1), (3,3,0), (3,2,1), (2,2,2)\}$.
It is easy to check that $A_{4,3,6}A_{4,3,7}$ is an invertible matrix and $\rank A_{4,3,6}=\rank A_{4,3,7}=4$. We will see soon in \S\ref{subsec rank} that this is not a coincidence.

\subsection{Rank of \texorpdfstring{$A_{k, d, n}$}{Akdn} via a representation of \texorpdfstring{$\mathfrak{sl}_2(\bC)$}{sl2C}}
\label{subsec rank}

In this subsection, we compute the rank of the weighted incidence matrix $A_{k, d, n}$ defined in \S\ref{subsec matrix}, which is crucial in the proof of \cref{thm:plov}.

\begin{theorem}
\label{thm:full-rank}
Fix positive integers $k,d$ and let $n\in \bN$ be a nonnegative integer.
\begin{enumerate}[label=\emph{(\arabic*)}, ref=(\arabic*)]
\item \label{thm:full-rank-1} If $0\le n < dk/2$, then the product matrix
\[
A_{k,d, n+1}A_{k,d, n+2}\cdots A_{k,d, dk-n} \in \Mat_{p(k,d,n) \times p(k,d,dk-n)}(\bQ)
\]
is invertible (note that $p(k,d,n)=p(k,d,dk-n)$ by symmetry).

\item \label{thm:full-rank-2} If $1\leq n\leq dk$, then the matrix $A_{k, d, n}$ is of full rank, i.e.,
\[
\rank A_{k, d, n} = \min (p(k, d, n-1), p(k, d, n)).
\]
To be more precise,
\[
\rank A_{k, d, n}=
\begin{cases}
p({k, d, n-1}), &\text{if } 1\leq n\leq \lceil dk/2 \rceil, \\
p({k, d, n}), &\text{if } \lfloor dk/2 \rfloor < n\leq dk.
\end{cases}
\]
\end{enumerate}
\end{theorem}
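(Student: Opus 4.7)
The plan is to realize the family $\{A_{k,d,n}\}_{n}$ as Lefschetz-type operators on the weight spaces of a natural finite-dimensional representation of $\mathfrak{sl}_2(\bC)$, so that part (1) reduces directly to the hard Lefschetz theorem and part (2) follows by extracting individual factors from the invertible products produced in part (1).

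First I would set up the representation. Let $V_k$ denote the irreducible $\mathfrak{sl}_2(\bC)$-module of dimension $k+1$ with basis $v_0,\dots,v_k$, on which the lowering operator satisfies $Fv_i=v_{i+1}$ (with $Fv_k=0$) and $v_i$ has weight $k-2i$. Form the symmetric power $W\coloneqq\Sym^d(V_k)$. The ordered monomials $v_0^{e_0}\cdots v_k^{e_k}$ with $\sum_i e_i=d$ form a basis of $W$, and they are in canonical bijection with $\bigsqcup_n P(k,d,n)$ via $\lambda=(k^{e_k},\dots,0^{e_0})\leftrightarrow v_0^{e_0}\cdots v_k^{e_k}$. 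The weight of the monomial attached to $\lambda\in P(k,d,n)$ equals $\sum_i(k-2i)e_i=dk-2n$, so the weight decomposition reads $W=\bigoplus_{n=0}^{dk}W_n$ with $\dim W_n=p(k,d,n)$. Because $F$ acts by the Leibniz rule, a direct computation yields
\[
F(v_0^{e_0}\cdots v_k^{e_k})=\sum_{i:\ e_i>0}e_i\cdot v_0^{e_0}\cdots v_i^{e_i-1}v_{i+1}^{e_{i+1}+1}\cdots v_k^{e_k},
\]
which shows that the matrix of $F\colon W_{n-1}\to W_n$ in the monomial basis is exactly the transpose $A_{k,d,n}^{\sT}$.

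Given this identification, the product in part (1) satisfies
\[
A_{k,d,n+1}\,A_{k,d,n+2}\cdots A_{k,d,dk-n}=\bigl(F^{dk-2n}\colon W_n\to W_{dk-n}\bigr)^{\sT},
\]
which represents (up to transposition) the iterate of the weight-lowering map $F$ carrying the weight-$(dk-2n)$ subspace into the weight-$(-(dk-2n))$ subspace. By the hard Lefschetz theorem for finite-dimensional $\mathfrak{sl}_2(\bC)$-modules, for any $\ell\ge 0$ the operator $F^{\ell}$ restricts to an isomorphism between the weight-$\ell$ and weight-$(-\ell)$ subspaces; taking $\ell=dk-2n>0$ establishes part (1). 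Part (2) is then extracted one factor at a time: when $1\le n\le\lceil dk/2\rceil$, applying part (1) with index $n-1$ shows that $A_{k,d,n}$ appears as the leftmost factor of an invertible product, so has full row rank $p(k,d,n-1)$; when $\lfloor dk/2\rfloor<n\le dk$, applying part (1) with index $dk-n$ shows that $A_{k,d,n}$ appears as the rightmost factor of an invertible product, so has full column rank $p(k,d,n)=p(k,d,dk-n)$.

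The hard part will be pinning down the identification in the second step with the correct orientation, namely that $A_{k,d,n}$ is literally $F^{\sT}$ and not, say, a nontrivial diagonal rescaling of the raising operator $E$ (which satisfies $Ev_i=i(k-i+1)v_{i-1}$ and yields a rank-equivalent but genuinely different matrix). Working with $F$ instead of $E$ eliminates the $i(k-i+1)$ factors and produces precisely the multiplicities $e_i$ that appear in the definition of $A_{k,d,n}$. As a pleasant byproduct of the representation-theoretic picture, the weight multiplicities of any finite-dimensional $\mathfrak{sl}_2(\bC)$-module are automatically symmetric and unimodal, which reproduces \eqref{eq unimod} for free.
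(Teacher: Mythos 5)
Your proposal is correct and follows essentially the same route as the paper's own proof: both realize $A_{k,d,n}^{\sT}$ as the matrix of the lowering operator between adjacent weight spaces of $\Sym^d$ of the $(k+1)$-dimensional irreducible $\mathfrak{sl}_2(\bC)$-module (choosing the normalization $Fv_i=v_{i+1}$ precisely so that the Leibniz rule produces the multiplicities $e_i$), and both then invoke the $\mathfrak{sl}_2$ hard Lefschetz isomorphism to get part~(1). The only cosmetic difference is in deducing~(2): you peel off the leftmost or rightmost factor of the invertible product, while the paper directly cites the injectivity/surjectivity of $\mathrm Y$ between weight spaces, which is the same fact stated at the level of operators.
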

\begin{proof}
Consider the Lie algebra $\mathfrak{sl}_2(\bC)$ generated by
\[
\mathrm H=\begin{pmatrix} 1 & 0\\0 & -1\end{pmatrix}, \quad 
\mathrm X=\begin{pmatrix} 0 & 1\\
0& 0\end{pmatrix}, \quad
\mathrm Y=\begin{pmatrix}0 & 0\\
1& 0\end{pmatrix}.
\]
Let $W_{k}=\bigoplus_{i=0}^{k}\bC x_i$ be the $(k+1)$-dimensional irreducible representation of $\mathfrak{sl}_2(\bC)$, where for each $0\leq i\leq k$, $x_i$ is an eigenvector of $\mathrm H$ with eigenvalue $k-2i$.
Also, rescaling the eigenvectors $x_i$, we may assume that $\mathrm Y(x_i)=x_{i+1}$ for each $0\leq i\leq k-1$ and $\mathrm Y(x_k)=0$.

We then consider the induced representation $\Sym^d W_{k}$ of $\mathfrak{sl}_2(\bC)$.
Since $\mathrm H$ is semisimple, we have an $\mathrm H$-eigendecomposition
\[
\Sym^d W_{k}=\bigoplus_{n=0}^{dk}V^{k, d}_{dk-2n},
\]
where each $V^{k, d}_{dk-2n}$ is a $\bC$-vector space generated by the following basis of monomials
\begin{align}
\label{basis of V}
\Bigg\{x_0^{e_0}x_1^{e_1}\dots x_k^{e_k} : \sum_{i=0}^k ie_i=n, \, \sum_{i=0}^k e_i=d, \, e_i\in \bN \text{ for } 0\leq i\leq k\Bigg\}.
\end{align}
Indeed, $V^{k, d}_{dk-2n}$ is the eigenspace of the induced $\mathrm H$-action with eigenvalue $dk-2n$, as 
\begin{align*}
\mathrm H(x_0^{e_0}x_1^{e_1}\dots x_k^{e_k}) = \bigg(\sum_{i=0}^{k} e_i(k-2i)\bigg) x_0^{e_0}x_1^{e_1}\dots x_k^{e_k}.
\end{align*}
Also, the induced action
\[
\mathrm Y\colon V^{k, d}_{dk-2(n-1)}\lra V^{k, d}_{dk-2n}
\]
is exactly given by the Leibniz rule
\begin{align}
\label{eq:leibniz}
\begin{split}
\mathrm Y(x_0^{e_0}x_1^{e_1}\dots x_k^{e_k})={}& e_0 \, x_0^{e_0-1}x_1^{e_1+1}\dots x_k^{e_k}+\dots + e_{k-1} \, x_0^{e_0}\dots x_{k-1}^{e_{k-1}-1} x_k^{e_k+1} \\
={}&\sum_{i=0}^{k-1} e_{i} \, x_0^{e_0}x_1^{e_1}\dots x_{i}^{e_i-1}x_{i+1}^{e_{i+1}+1} \dots x_k^{e_k}.
\end{split}
\end{align}
Here in the last sum, if $e_{i}=0$, then $x_0^{e_0}x_1^{e_1}\dots x_{i}^{e_i-1}x_{i+1}^{e_{i+1}+1} \dots x_k^{e_k}$ is no longer a monomial.
But we can formally set $e_{i} \, x_0^{e_0}x_1^{e_1}\dots x_{i}^{e_i-1}x_{i+1}^{e_{i+1}+1} \dots x_k^{e_k}=0$ so that the sum is still meaningful.


Note that for each $0\le n\le dk$, our $V^{k, d}_{dk-2n}$ is canonically isomorphic with the $\bC$-vector space $\bC^{p(k,d,n)}$ formally generated by the finite set $P(k, d, n)$ of restricted partitions.
In fact, the basis \eqref{basis of V} of $V^{k, d}_{dk-2n}$ has a natural $1$-$1$ correspondence to $P(k, d, n)$ as follows:
\begin{align*}
\mathbf{x}^\lambda \coloneqq x_0^{e_0}x_1^{e_1}\dots x_k^{e_k}\longleftrightarrow \lambda = (k^{e_k}, \dots, 1^{e_1}, 0^{e_0}).
\end{align*}
Now for any $\mu=(k^{e_k}, \dots, 1^{e_1}, 0^{e_0})\in P(k, d, n-1)$, \cref{eq:leibniz} just becomes
\begin{align*}
\mathrm Y(\mathbf{x}^\mu) = \sum_{\substack{0\leq i\leq k-1;\\[1pt] e_i>0}} e_i \, \mathbf{x}^{\mu(i)} = \sum_{\lambda\in P(k,d,n)} a_{\mu,\lambda} \, \mathbf{x}^{\lambda},
\end{align*}
where $\mu(i)$ and $a_{\mu, \lambda}$ are defined in  \S\ref{subsec matrix}.
In this way,
we have checked that
the following diagram
\begin{equation*}
\begin{tikzcd}
V^{k, d}_{dk-2(n-1)} \arrow[rr, "\mathrm Y"] \arrow[d, "\isom"] & & V^{k, d}_{dk-2n} \arrow[d, "\isom"] \\
\bC^{p(k,d,n-1)} \arrow[rr, "A_{k,d,n}^\sT \cdot \bullet"'] & & \bC^{p(k,d,n)}
\end{tikzcd}
\end{equation*}
commutes, where $A_{k, d, n}^\sT\in \Mat_{p(k,d,n) \times p(k,d,n-1)}(\bN)$ is the transpose of $A_{k, d, n}$ defined in \S\ref{subsec matrix} and the bottom linear map is given by multiplying $A_{k, d, n}^\sT$ with coordinate vectors in columnar form.

By standard knowledge of representation theory of $\mathfrak{sl}_2(\bC)$ (see, e.g., \cite[pp. 118--120]{GH78} or \cite[Theorem~IV.4]{Serre87}), we know that for any $0\leq n < dk/2$, the composite
\[
\mathrm{Y}^{dk-2n}\colon V^{k, d}_{dk-2n}\lra V^{k, d}_{2n-dk}
\]
is an isomorphism.
Hence $A_{k,d,dk-n}^\sT \cdots A_{k,d, n+2}^\sT A_{k,d, n+1}^\sT$ is invertible and Assertion~\ref{thm:full-rank-1} follows.
In particular, we also have that
$\mathrm Y\colon V^{k, d}_{dk-2(n-1)}\to V^{k, d}_{dk-2n}$ is injective if $1\leq n\leq \lceil dk/2 \rceil$ and surjective if $\lfloor dk/2 \rfloor < n \leq dk$.
This concludes the proof.
\end{proof}

\begin{remark}
\label{rmk:unimodality}
According to the proof of \cref{thm:full-rank}, we actually recover the unimodality of $p(k, d, n)$ as in \cref{eq unimod} by the injectivity/surjectivity of $\mathrm Y\colon V^{k, d}_{dk-2(n-1)}\to V^{k, d}_{dk-2n}$.
This is very similar to the idea of Proctor \cite{Proctor82}, who first used $\mathfrak{sl}_2(\bC)$-representation to show such unimodality (which circumvents the hard Lefschetz theorem in Stanley's proof \cite{Stanley82}).
In fact, our idea of the proof 
of \cref{thm:full-rank} originates from \cite{Proctor82}.

But we should emphasize that our construction is different from Proctor's.
The matrices
in Proctor's construction are the $(0,1)$-matrices obtained by replacing all nonzero entries of our $A_{k,d, n}^\sT$ by $1$ (see \cite[\S{4}]{Proctor82}).
Equivalently, his matrices are also obtained by multiplication by a hyperplane section of the Grassmannian $G\coloneqq G(d,d+k)$ (see \cite[Proof of Proposition~9.10]{Stanley82});
however, as far as we know, our matrices may not be obtained in this geometric way and hence they only form a correspondence in $\CH^{dk+1}(G\times G)_\bQ$.
The key point here is that the matrices representing $\mathrm Y$ in our construction are exactly $A_{k,d, n}^\sT$, so we can get interesting properties for these matrices, which is crucial to the proof of our \cref{thm:plov}.
\end{remark}

\begin{remark}
On the other hand, Botong Wang informs us that these matrices $A_{k,d, n}^\sT$ also represent the Lefschetz operators on cohomology of the symmetric product $\Sym^d(\bP^k) \coloneqq (\bP^k)^d/\mathfrak{S}_d$.
Let us reproduce his argument.
Cohomology ring of $\Sym^d(\bP^k)$ is isomorphic to the ring of symmetric polynomials in $d$-variables $x_1,\dots,x_d$ of degree at most $dk$.
For any $1\leq n \leq dk$ and any partition $\lambda = (\lambda_1, \dots, \lambda_d) \in P(k,d,n)$, denote the corresponding monomial symmetric polynomial by
\[
m_\lambda \coloneqq \sum_{\alpha} \prod_{i=1}^d x_i^{\alpha_i},
\]
where the sum is taken over all \emph{distinct} permutations $\alpha = (\alpha_1, \dots, \alpha_d)$ of $\lambda = (\lambda_1, \dots, \lambda_d)$.
It is well known that the set $\{m_\lambda : \lambda\in P(k,d,n)\}$ is a $\bQ$-basis of $H^{2n}(\Sym^d(\bP^k), \bQ)$.
Define
\[
\widetilde{m}_\lambda \coloneqq \frac{e_0!e_1!\cdots e_k!}{d!} m_\lambda, \text{ if } \lambda = (k^{e_k}, \dots, 1^{e_1}, 0^{e_0}).
\]
Then one can verify that the multiplication by the ample divisor $m_{(1,0^{d-1})} = \sum_{i=1}^d x_i$ on $H^{2n-2}(\Sym^d(\bP^k), \bQ)$ under the weighted basis $\{\widetilde{m}_\mu : \mu \in P(k,d,n-1)\}$ is exactly represented by our matrix $A_{k,d,n}^\sT$.
\end{remark}

\section{An upper bound for polynomial volume growth}
\label{sec proofs plov}

\subsection{Proofs of Theorem \ref{thm:plov} and Corollary \ref{cor:plov}}
\label{subsec proofs plov}

\begin{proof}[Proof of \cref{thm:plov}]
Replacing $f$ by certain iterate, we may assume that $f^*|_{\N^1(X)_\bR} = \id + N$ is unipotent (see \cite[Lemma~2.6]{Hu-GK-AV}). By the definition of $k$, $N^k\neq 0$ and $N^{k+1}=0$.
Then thanks to \cref{lemma:plov}, it suffices to show that
\[
(N^{k}H_X)^{e_k} \cdots (N H_X)^{e_1} \cdot H_X^{e_0} = 0,
\]
whenever $\sum_{i=0}^k e_i=d$ and $\sum_{i=0}^k ie_i > dk/2$ with each $e_i\in \bN$. When $k=0$ this is trivial, so we may assume that (the even) $k>0$ (and hence $d\ge 2$).
It suffices to prove the following.

\begin{claim}\label{claim final}
Keep the setting in \cref{thm:plov}. Fix any divisor $D$ on $X$.
For each restricted partition $\lambda=(k^{e_k},\ldots,1^{e_1},0^{e_0})\in P(k,d,n)$ of $n\in \bN$, denote
\[
v_{\lambda} \coloneqq (N^{k}D)^{e_k} \cdots (ND)^{e_1} \cdot D^{e_0}\in \bR.
\]
Then $v_{\lambda}=0$ for any $\lambda\in P(k, d, n)$ and any $dk/2<n\leq dk$.
\end{claim}

We will prove \cref{claim final} by backward induction on $n$.
First we consider the case $n=dk$. The only partition in $P(k, d, dk)$ is $(k^d, (k-1)^0, \dots, 0^0)$. We consider the restricted partition $(k^{d-1}, (k-1)^1, (k-2)^0, \dots, 0^0)$.
Recall that $f^*|_{\N^1(X)_\bR} = \id + N$, $N^k\neq 0$, and $N^{k+1}=0$.
Since $f$ is an automorphism of $X$, the projection formula asserts that
\begin{align*}
v_{(k^{d-1}, (k-1)^1, (k-2)^0, \dots, 0^0)} &= (N^{k}D)^{d-1} \cdot N^{k-1}D\\
&= (f^*(N^{k}D))^{d-1} \cdot f^*(N^{k-1}D)\\
&= (N^kD)^{d-1} \cdot (N^{k-1}D + N^{k}D)\\
&= v_{(k^{d-1}, (k-1)^1, (k-2)^0, \dots, 0^0)}+v_{(k^d, (k-1)^0, \dots, 0^0)}.
\end{align*}
This implies that $v_{(k^d, (k-1)^0, \dots, 0^0)}=0$.

From now on, fix an $n$ such that $dk/2<n<dk$.
We assume that 
$v_{\lambda}=0$ for any $\lambda\in P(k, d, m)$ and any $n< m\leq dk$ as the inductive hypothesis.
We will prove \cref{claim final} for $n$.

Given any $\mu=(k^{e_k},\ldots,1^{e_1},0^{e_0})\in P(k,d,n-1)$, recall that for any $0\le i\le k-1$,
\[
\mu(i) \coloneqq (k^{e_k},\ldots,(i+1)^{e_{i+1}+1},i^{e_i-1},\ldots,0^{e_0})\in P(k,d,n)
\]
is a restricted partition of $n$ as long as $e_i>0$. 
Similarly, as $f$ is an automorphism of $X$, the projection formula asserts that
\begin{align*}
v_{\mu} &= (N^{k}D)^{e_k} \cdot (N^{k-1}D)^{e_{k-1}} \cdots D^{e_0} \\
&= (f^*(N^{k}D))^{e_k} \cdot (f^*(N^{k-1}D))^{e_{k-1}} \cdots (f^*D)^{e_0} \\
&= (N^kD)^{e_k} \cdot (N^{k-1}D + N^{k}D)^{e_{k-1}} \cdots (D + ND)^{e_0} \\
&= v_\mu + \sum_{\substack{0\leq i\leq k-1;\\[1pt] e_i>0}} e_i v_{\mu(i)} + \sum_\xi c_\xi v_\xi,
\end{align*}
where the last sum is taken over some restricted partitions $\xi\in \bigcup_{m\geq n+1}P(k, d, m)$ with certain coefficients $c_\xi$ (though it is not so important by induction, the precise range of $m$ is $n+1\leq m \leq n-1+e_0+\dots+e_{k-1} \le dk$).
Now, by the inductive hypothesis, all such $v_\xi=0$. 
In summary, for each $\mu \in P(k,d,n-1)$, we have the following
linear equation
\[
\sum_{\substack{0\leq i\leq k-1;\\[1pt] e_i>0}} e_i v_{\mu(i)} = 0.
\]
As $\mu$ runs over $P(k,d,n-1)$ and $i$ is allowed to vary, $\mu(i)$ runs over $P(k,d,n)$.
We thus get a homogeneous system of linear equations as follows:
\[
A_{k,d,n}(v_{\lambda})_\lambda = (a_{\mu, \lambda})_{\mu, \lambda} (v_{\lambda})_\lambda=0,
\]
where $A_{k,d,n}$ is the weighted incidence matrix defined in \S\ref{subsec matrix}, $\mu\in P(k,d,n-1)$, $\lambda\in P(k,d,n)$, and $(v_{\lambda})_\lambda$ is a column vector in $\Mat_{p(k,d,n) \times 1}(\bR)$.
As $n>dk/2$, by \cref{thm:full-rank}~\ref{thm:full-rank-2}, we see that $v_{\lambda} = 0$ for all $\lambda\in P(k,d,n)$.
This concludes the proof of \cref{claim final} by induction.
\end{proof}

\begin{proof}[Proof of \cref{cor:plov}]
It follows readily from \cref{thm:plov,cor:degree-growth-upper-bound}.
\end{proof}

\begin{remark}
\label{rmk:Kahler-singular}
Our proof shows that a K\"ahler version of \cref{cor:plov} also holds.
Indeed, one just needs to replace the ample divisor $H_X$ by a K\"ahler class $\omega_X$, $\N^1(X)_\bR$ by $H^{1,1}(X,\bR)$, etc., so that the problem is reduced to solving the same homogeneous systems of linear equations, via the intersection theory of Kähler classes.
\end{remark}

\begin{remark}
\label{rmk:conj-equality}
Recall that in \cref{rmk:conj-lower-bound} we asked whether $\plov(f)\geq d+r(r+1)$, where $r=k/2$.
When $k=2(d-1)$ is maximal, this inequality becomes $\plov(f)\geq d^2$.
Therefore, combining with our \cref{cor:plov}, we wonder whether $\plov(f)=d^2$ once $k=2(d-1)$.
\end{remark}

\subsection{Further discussions and examples}
\label{subsec examples}

In this subsection, we discuss the optimality of \cref{thm:plov} and possible extensions.
First, we can slightly improve the upper bound in \cref{thm:plov} by a constant $1$ for special couples $(k, d)$.
Note that $k$ is always an even nonnegative integer by \cite[Lemma~6.12]{Keeler00} and at most $2d-2$ by \cref{cor:degree-growth-upper-bound}.

\begin{proposition}\label{prop improve -1}
With the same assumption as in \cref{thm:plov}, suppose further that $(k, d)$ is among one of the following couples:
\begin{align*}
{}& (2, d) \text{ with } d \text{ odd},\\
{}&  (6,5), (6,7), (6,9), (6,11), (6,13), (10,7). 
\end{align*}
Then we have
\[
\plov(f) \le (k/2+1)d-1.
\]
\end{proposition}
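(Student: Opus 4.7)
The plan is to refine the backward induction in the proof of \cref{thm:plov} by running it one additional step. Recall that the bound $\plov(f) \le (k/2+1)d$ was obtained by establishing $v_\lambda = 0$ for every $\lambda \in P(k,d,n)$ with $n > dk/2$ and then invoking \cref{lemma:plov}. To reduce this bound by $1$, I would show that in the listed cases, $v_\lambda = 0$ also holds for every $\lambda \in P(k,d,dk/2)$; note that $dk/2$ is an integer because $k$ is even in all the listed couples.

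Assuming the vanishing $v_\xi = 0$ for all $\xi \in \bigcup_{m > dk/2} P(k,d,m)$ already proved in \cref{thm:plov}, the same pullback / projection formula computation applied at level $n = dk/2$ produces the homogeneous linear system
\[
A_{k,d,\,dk/2}\,(v_\lambda)_\lambda = 0.
\]
This forces $(v_\lambda)_\lambda = 0$ as soon as $A_{k,d,\,dk/2}$ is injective. By \cref{thm:full-rank}~\ref{thm:full-rank-2}, $\rank A_{k,d,\,dk/2} = \min(p(k,d,dk/2-1),\, p(k,d,dk/2))$, so in view of the unimodality inequality $p(k,d,dk/2-1) \le p(k,d,dk/2)$, injectivity is equivalent to
\[
p(k,d,dk/2-1) = p(k,d,dk/2),
\]
and in this case $A_{k,d,\,dk/2}$ becomes a square matrix of full rank, hence invertible.

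The task therefore reduces to verifying this single equality of restricted partition numbers for each couple $(k,d)$ in the list. For $(2,d)$ with $d$ odd, a partition in $P(2,d,n)$ with $0\le n\le d$ is determined by the number of parts equal to $2$, so $p(2,d,n) = \lfloor n/2 \rfloor + 1$ on that range; plugging in $n = d-1$ and $n = d$ for $d$ odd both yield $(d+1)/2$, as required. For each of the six remaining couples, the equality can be checked directly, e.g.\ by reading off the two relevant consecutive coefficients of the explicit Gaussian binomial polynomial $\binom{d+k}{k}_q$, or by iterating the recursion $p(k,d,n) = p(k,d-1,n) + p(k-1,d,n-d)$. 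The main obstacle is therefore purely computational; conceptually, these couples coincide with the (small) exceptional cases where strict unimodality of $\binom{d+k}{k}_q$ at its central coefficient fails, which is what allows the matrix $A_{k,d,\,dk/2}$ to be square and invertible at the threshold.
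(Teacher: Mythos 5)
Your proposal is correct and takes essentially the same route as the paper: reduce to the invertibility (full column rank) of $A_{k,d,dk/2}$, observe via \cref{thm:full-rank}~\ref{thm:full-rank-2} and unimodality that this is equivalent to $p(k,d,dk/2-1) = p(k,d,dk/2)$, then verify that equality in each listed case ($(2,d)$ with $d$ odd by the formula $p(2,d,n)=\lfloor n/2\rfloor+1$ on $0\le n\le d$, and the remaining couples by direct computation). Your closing remark about these couples being exactly the exceptional cases where strict unimodality of the central coefficient of $\binom{d+k}{k}_q$ fails is a nice conceptual gloss that the paper only hints at through an equation label.
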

\begin{proof}
Recall that in \cref{thm:full-rank}~\ref{thm:full-rank-2}, we have shown that all matrices $A_{k,d,n}$ have full column rank $p(k,d,n)$ if $n>dk/2$; moreover, by \cref{lemma:plov}, this implies the upper bound $(k/2+1)d$ in \cref{thm:plov}.
Now, it suffices to show that the matrix $A_{k, d, dk/2}$ also has full column rank $p(k, d, dk/2)$ under our assumption.
Again by \cref{thm:full-rank}~\ref{thm:full-rank-2}, this is equivalent to 
\begin{align}\label{eq break strict unimod}
p(k, d, dk/2-1)=p(k, d, dk/2).
\end{align}

If $(k,d)=(2, d)$ with $d$ odd, it is easy to check that $p(2, d, d-1)=p(2, d, d)=(d+1)/2$. For other couples in our list, one can check that \cref{eq break strict unimod} holds by hand or by a computer program (e.g., the \textsf{IntegerPartitions} function in Mathematica).
\end{proof}

In particular, when $k=2$, \cref{thm:plov} and \cref{prop improve -1} can be summarized as the following corollary, which actually inspires (and convinces) us to work out the general case.

\begin{corollary}
\label{cor:k=2}
Let $X$ be a normal projective variety of dimension $d$ over $\bk$ and $f$ an automorphism of $X$.
Suppose that $f^*|_{\N^1(X)_\bR}$ is quasi-unipotent and the maximum size of Jordan blocks of $f^*|_{\N^1(X)_\bR}$ is $3$.
Then we have
\[
\plov(f) \le 2\lfloor \frac{d}{2}\rfloor +d= 
\begin{cases}
2d, & \text{ if } d \text{ is even}, \\
2d-1, & \text{ if } d \text{ is odd}.
\end{cases}
\]
\end{corollary}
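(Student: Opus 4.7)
The corollary is essentially a direct specialization of the two results proved immediately before it, so my plan is simply to assemble them.

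First I would note that the hypothesis ``maximum size of Jordan blocks of $f^*|_{\N^1(X)_\bR}$ is $3$'' is exactly the case $k+1 = 3$, i.e.\ $k=2$, in the notation of \cref{thm:plov}. Plugging $k=2$ into \cref{thm:plov} immediately yields
\[
\plov(f) \le (k/2+1)d = 2d,
\]
which already settles the even case, since $2\lfloor d/2 \rfloor + d = 2d$ when $d$ is even.

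For $d$ odd I would invoke \cref{prop improve -1}, whose statement explicitly lists the couple $(k,d) = (2,d)$ with $d$ odd as one of the situations in which the upper bound of \cref{thm:plov} can be sharpened by one. That proposition gives $\plov(f) \le (k/2+1)d - 1 = 2d-1$, which coincides with $2\lfloor d/2 \rfloor + d$ when $d$ is odd.

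There is essentially no obstacle at this stage; the substance has already been carried out in \cref{thm:plov} (the hard Lefschetz argument via the $\mathfrak{sl}_2(\bC)$-representation in \cref{thm:full-rank}) and in \cref{prop improve -1} (the combinatorial identity $p(2,d,d-1) = p(2,d,d) = (d+1)/2$ for odd $d$, which one checks by a direct count of restricted partitions). Thus the proof is a one-line case split combining the two bounds into the uniform expression $2\lfloor d/2\rfloor + d$.
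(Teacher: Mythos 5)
Your proposal is correct and matches the paper's own reasoning exactly: the paper introduces \cref{cor:k=2} as a summary of \cref{thm:plov} (giving $\plov(f)\le 2d$ for $k=2$) together with \cref{prop improve -1} (sharpening to $2d-1$ when $d$ is odd), with no additional argument needed.
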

This result was announced in \cite[Theorem~3.5]{Hu-GK-AV} and the upper bound is optimal by \cite[Remark~5.3]{Hu-GK-AV}.
During the preparation of this paper, we were informed by the authors of \cite{LOZ} that they also obtained \cref{cor:k=2} using dynamical filtrations (see \cite[Theorem~4.2]{LOZ}).

The following example shows that the upper bound in \cref{thm:plov} is optimal
as long as $k/2+1$ divides $d$. This example is a modification of \cite[Example~6.4]{LOZ} based on \cite{Hu-GK-AV}.

\begin{example}\label{ex optimal}
Fix a positive integer $d\geq 2$ and a positive even integer $2\le k\le 2(d-1)$.
Denote $d_0=k/2+1$ and write $d=m_0d_0+r_0$ with $m_0\ge 1$ and $0\le r_0<d_0$.
We shall illustrate when the upper bound $(k/2+1)d$ in \cref{thm:plov} is optimal using abelian varieties.

Let $E$ be an elliptic curve.
Let $X=E^d$ and $f$ an automorphism of $X$ which is defined coordinately by the matrix $J_{1,r_0}\oplus J_{1,d_0}^{\oplus m_0}$, where $J_{1,n}$ is a standard Jordan block with eigenvalue $1$ and of size $n$ (note that in \cite[Example~6.4]{LOZ}, $f$ is defined by $J_{1,d}$).
Then by \cite[Theorem~1.5]{Hu-GK-AV}, the pullback $f^*|_{H_\et^{1}(X, \bQ_\ell)}$ of $f$ on the first $\ell$-adic \'etale cohomology $H_\et^{1}(X, \bQ_\ell)$, or equivalently, the induced map $V_\ell(f)$ of $f$ on the $\ell$-adic Tate space $V_\ell(X)$, is given by
\[
(J_{1,r_0}\oplus J_{1,d_0}^{\oplus m_0}) \oplus (J_{1,r_0}\oplus J_{1,d_0}^{\oplus m_0}).
\]
It follows from \cite[Theorem~1.1]{Hu-GK-AV} that $\plov(f)=m_0d_0^2+r_0^2$.
(Here for readers preferring to work over the complex number field, it is more convenient to replace $H_\et^{1}(X, \bQ_\ell)$ by $H^{1,0}(X)$ so that $f^*|_{H^{1,0}(X)}$ is given by $J_{1,r_0}\oplus J_{1,d_0}^{\oplus m_0}$, as in \cite{LOZ}.)

On the other hand, by our construction and \cite[Theorem~1.12]{Hu-GK-AV}, the maximum size of Jordan blocks of $f^*|_{\N^1(X)_\bR}$ is exactly $k+1=2d_0-1$.
Putting together, we obtain that
\[
\plov(f)=m_0d_0^2+r_0^2 \le m_0d_0^2+r_0d_0=d_0(m_0d_0+r_0)=(k/2+1)d.
\]
So our upper bound $(k/2+1)d$ is optimal if $r_0=0$, i.e., $(k/2+1) \mid d$, which, in particular, contains the case $\plov(f)\le d^2$ when $k=2d-2$.
\end{example}
 
\begin{remark}
When $k\ge 4$ and $(k/2+1) \nmid d$, our upper bound $(k/2+1)d$ might be improved not just by $1$.

For example, when $(k, d)=(4,4)$, combining \cref{thm:positivity} in the proof of \cref{thm:plov}, we can actually get an optimal upper bound $\plov(f)\leq 10$ instead of $12$. 
We omit the proof since it is not particularly illuminating.
 
For another example, consider the case $(k,d)=(6,5)$.
Our \cref{prop improve -1} asserts that $\plov(f)\le 19$.
However, using our construction 
in \cref{ex optimal} with $(m_0, d_0, r_0)=(1, 4, 1)$, we can only achieve an example of an automorphism $f$ of an abelian $5$-fold with $\plov(f)=17$. 

In order to obtain sharper upper bounds for the polynomial volume growth or the Gelfand--Kirillov dimension, it might be inevitable to invoke more algebro-geometric or noncommutative algebraic tools.
On the other hand, exploring new methods to construct examples with large $\plov(f)$ is also a very interesting problem.
\end{remark}

\section*{Conflict of Interest}
 The authors declare that they have no conflict of interest.

\section*{Acknowledgments}
The authors are grateful to Botong Wang and Shilin Yu for valuable discussions and for sharing their insights on \cref{thm:full-rank}.
They also thank Jason P. Bell, Serge Cantat, Jungkai A. Chen, Laura DeMarco, Lie Fu, Hsueh-Yung Lin, Keiji Oguiso, Zinovy Reichstein, Hehui Wu, and De-Qi Zhang for stimulating conversations.
The authors further thank the referee for helpful comments and suggestions.
The first author acknowledges the hospitality and support of SCMS at Fudan University during his visit in December 2022.
The second author is a member of LMNS, Fudan University.

\bibliographystyle{amsplain}
\bibliography{mybib}

\end{document}